\documentclass{amsart}

\usepackage{amssymb,amsmath,amsfonts,cite,graphicx,gastex}

\DeclareMathOperator{\var}{var}

\DeclareMathOperator{\ini}{ini}

\DeclareMathOperator{\occ}{occ}

\DeclareMathOperator{\con}{con}

\DeclareMathOperator{\FIC}{FIC}

\DeclareMathOperator{\Part}{Part}

\newtheorem{theorem}{Theorem}[section]

\newtheorem{proposition}[theorem]{Proposition}

\newtheorem{lemma}[theorem]{Lemma}

\newtheorem{corollary}[theorem]{Corollary}

\numberwithin{equation}{section}

\makeatletter

\renewcommand*\subjclass[2][2010]{\def\@subjclass{#2}\@ifundefined{subjclassname@#1}{\ClassWarning{\@classname}{Unknown edition (#1) of Mathematics Subject Classification; using '2010'.}}{\@xp\let\@xp\subjclassname\csname subjclassname@#1\endcsname}}

\renewcommand{\subjclassname}{\textup{2010} Mathematics Subject Classification}

\makeatother

\begin{document}

\title[Cancellable elements of the lattice of monoid varieties]{Cancellable elements of the lattice\\ of monoid varieties}

\thanks{The first author is supported by the Ministry of Science and Higher Education of the Russian Federation (project FEUZ-2020-0016)}

\author[S.\,V.~Gusev]{Sergey V.~Gusev}
\address{Institute of Natural Sciences and Mathematics, Ural Federal University, Lenina str.\@~51, 620000 Ekaterinburg, Russia}
\email{sergey.gusb@gmail.com}

\author[E.\,W.\,H.~Lee]{Edmond W.\,H.~Lee}
\address{Department of Mathematics, Nova Southeastern University, Fort Lauderdale, FL 33314, USA}
\email{edmond.lee@nova.edu}

\date{}

\begin{abstract}
The set of all cancellable elements of the lattice of semigroup varieties has recently been shown to be countably infinite.
But the description of all cancellable elements of the lattice $\mathbb{MON}$ of monoid varieties remains unknown.
This problem is addressed in the present article.
The first example of a monoid variety with modular but non-distributive subvariety lattice is first exhibited.
Then a necessary condition of the modularity of an element in $\mathbb{MON}$ is established.
These results play a crucial role in the complete description of all cancellable elements of the lattice $\mathbb{MON}$.
It turns out that there are precisely five such elements.
\end{abstract}

\keywords{Monoid, variety, lattice of varieties, cancellable element of a lattice, modular element of a lattice}

\subjclass{20M07, 08B15}

\maketitle

\section{Introduction and summary}
\label{Sec: introduction}

The present article is concerned with the lattice $\mathbb{MON}$ of all monoid varieties, where monoids are considered as semigroups with an identity element that is fixed by a \mbox{0-ary} operation.
For many years, results on the lattice $\mathbb{MON}$ were scarce.
But recently, interest in this lattice has grown significantly; in particular, the study of its special elements was initiated in the articles~\cite{Gusev-18AU,Gusev-21+}.
In the present work, we continue these investigations.

Special elements play an important role in general lattice theory; see~\cite[Section~III.2]{Gratzer-11}, for instance.
We recall definitions of those types of special elements that are relevant here.
An element $x$ of a lattice $L$ is
\begin{align*}
&\text{\textit{cancellable} if}&&\forall\,y,z\in L\colon\,\,\, x\vee y=x\vee z\ \&\ x\wedge y=x\wedge z\longrightarrow y=z;\\
&\text{\textit{modular} if}&&\forall\,y,z\in L\colon\,\,\, y\le z\longrightarrow (x\wedge z)\vee y = (x\vee y)\wedge z.
\end{align*}
It is easy to see that every cancellable element is modular.

Our main goal is to describe all cancellable elements of the lattice $\mathbb{MON}$.
To formulate our main result, we need some definitions and notation.
Let~$\mathfrak X^+$ [respectively, $\mathfrak X^\ast$] denote the free semigroup [respectively, monoid] over a countably infinite alphabet~$\mathfrak X$.
Elements of~$\mathfrak X$ are called \textit{letters} and elements of~$\mathfrak X^\ast$ are called \textit{words}.
Words unlike letters are written in bold.
An identity is written as $\mathbf u \approx \mathbf v$, where $\mathbf u,\mathbf v \in \mathfrak X^\ast$.

Let $\bf T$, $\bf SL$, and $\bf MON$ denote the variety of trivial monoids, the variety of semilattice monoids, and the variety of all monoids, respectively.
For any identity system $\Sigma$, let $\var\Sigma$ denote the variety of monoids given by $\Sigma$.
Put \[ \mathbf C_2 = \var\{x^2\approx x^3,\, xy\approx yx\}\ \text{ and }\ \mathbf D = \var\{x^2 \approx x^3,\, x^2y \approx xyx \approx yx^2\}. \]
Then the following is our main result.

\begin{theorem}
\label{T: cancel}
A monoid variety is a cancellable element of the lattice $\mathbb{MON}$ if and only if it coincides with one of the varieties $\mathbf T$, $\mathbf{SL}$, $\mathbf C_2$, $\mathbf D$ or $\mathbf{MON}$.
\end{theorem}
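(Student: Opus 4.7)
The plan is to handle the two implications separately. For sufficiency, the varieties $\mathbf T$ and $\mathbf{MON}$ are the least and greatest elements of $\mathbb{MON}$ and are therefore automatically cancellable (for the bottom $0$, the condition $0\vee y=0\vee z$ already forces $y=z$, and dually for the top). For each of $\mathbf{SL}$, $\mathbf C_2$, and $\mathbf D$, I would locate the variety precisely within $\mathbb{MON}$ by describing the nearby portion of the lattice, in particular what the joins and meets of $\mathbf V$ with an arbitrary variety look like in terms of identities. Given $\mathbf Y,\mathbf Z$ with equal joins and meets with $\mathbf V$, one then tries to show that the defining identities of $\mathbf V$ together with the data of $\mathbf V\vee\mathbf Y$ and $\mathbf V\wedge\mathbf Y$ pin down the identities of $\mathbf Y$ uniquely. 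This kind of direct verification is feasible because each of $\mathbf{SL}$, $\mathbf C_2$, $\mathbf D$ is small and finitely based.

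For necessity, I would use that cancellability implies modularity, together with the necessary condition on modular elements of $\mathbb{MON}$ established earlier in the paper; call this condition $(\ast)$. The condition $(\ast)$ should already cut the candidates down to a short list consisting of the five named varieties together with at most finitely many others to be ruled out. To exclude a remaining candidate $\mathbf V$, the strategy is to embed $\mathbf V$ as one of the three mutually incomparable middle elements of an $M_3$ sublattice of $\mathbb{MON}$; if this succeeds, then $\mathbf V$ together with two further varieties $\mathbf Y\neq\mathbf Z$ satisfies
\[
\mathbf V\vee\mathbf Y=\mathbf V\vee\mathbf Z\quad\text{and}\quad\mathbf V\wedge\mathbf Y=\mathbf V\wedge\mathbf Z,
\]
violating cancellability. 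The existence of $M_3$-sublattices of $\mathbb{MON}$ is exactly what the first result of the paper provides (a modular but non-distributive subvariety lattice); the task is to tailor the construction to each bad candidate $\mathbf V$.

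The main obstacle will be the necessity direction, specifically producing an appropriate witness $M_3$ sublattice around each non-listed $\mathbf V$ satisfying $(\ast)$. Every such construction requires exhibiting three monoid varieties with a prescribed pattern of identities and inclusions — pairwise incomparable, with common meet and common join, and each interacting correctly with $\mathbf V$ — which is delicate and will likely absorb most of the technical work. By contrast, verifying cancellability for the five listed varieties should be comparatively routine once $(\ast)$ and the portion of $\mathbb{MON}$ around each of them have been mapped out. In total, these steps reduce Theorem~\ref{T: cancel} to a finite case analysis driven by the new modularity condition $(\ast)$ and the new non-distributive modular example.
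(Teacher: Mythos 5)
Your plan contains two genuine gaps. First, in the necessity direction you assert that the modularity condition $(\ast)$ (namely the identities $x^2\approx x^3$ and $x^2y\approx yx^2$ from Proposition~\ref{P: identities for mod}) cuts the candidates down to the five named varieties ``together with at most finitely many others.'' That is false: infinitely many monoid varieties satisfy those two identities, so a candidate-by-candidate exclusion, each requiring its own tailored $M_3$, is not a finite case analysis and the plan as stated does not terminate. The missing idea is the dichotomy that any variety satisfying $(\ast)$ other than $\mathbf T$, $\mathbf{SL}$, $\mathbf C_2$, $\mathbf D$ must \emph{contain} $\mathbf D_2$ (this is \cite[Lemma~3.3(i)]{Gusev-Vernikov-21+}); combined with $\mathbf C_3\nsubseteq\mathbf V$ (which follows from $x^2\approx x^3$) and the structure of $\mathfrak L(\mathbf R\vee\mathbf R^\delta)$ in Proposition~\ref{P: mod not distributive}, the single fixed pair $\mathbf R\ne\mathbf R^\delta$ then witnesses $\mathbf V\vee\mathbf R=\mathbf V\vee\mathbf R^\delta$ and $\mathbf V\wedge\mathbf R=\mathbf V\wedge\mathbf R^\delta=\mathbf D$ for \emph{every} remaining candidate at once. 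Note also that $\mathbf V$ need not itself be a middle element of an $M_3$; one only needs the two equalities above, which is weaker and is what actually gets verified.

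Second, the sufficiency argument for $\mathbf D$ is not the ``routine direct verification'' you describe, and the strategy of pinning down the identities of $\mathbf Y$ from the join and meet with $\mathbf D$ cannot work as stated, since $\mathbf Y$ ranges over all monoid varieties, most of which are not finitely based and are not determined by any manageable identity data. The actual proof needs three nontrivial external inputs: (i) that $\mathbf D$ is a \emph{modular} element of $\mathbb{MON}$ (\cite[Proposition~7]{Gusev-21+}), which is used in the chain of equalities $\mathbf X=(\mathbf D\wedge(\mathbf X\vee\mathbf Y))\vee\mathbf X=(\mathbf D\vee\mathbf X)\wedge(\mathbf X\vee\mathbf Y)$; (ii) that $\mathfrak L(\mathbf D)$ is the four-element chain $\mathbf T\subset\mathbf{SL}\subset\mathbf C_2\subset\mathbf D$; and (iii) the characterizations of varieties $\mathbf X$ with $\mathbf D\wedge\mathbf X$ equal to $\mathbf T$, $\mathbf{SL}$ or $\mathbf C_2$ as, respectively, group, completely regular, or commutative varieties, which is what guarantees $\mathbf D\wedge(\mathbf X\vee\mathbf Y)=\mathbf D\wedge\mathbf X$ since each of those three classes is closed under joins. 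For $\mathbf{SL}$ and $\mathbf C_2$ the efficient route is to invoke that they are costandard elements of $\mathbb{MON}$ (\cite[Theorem~1.2]{Gusev-18AU}), hence cancellable. Without these ingredients your outline does not yet constitute a proof.
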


Many articles were devoted to special elements of different types in the lattice $\mathbb{SEM}$ of all semigroup varieties;
an overview of results published before 2015 can be found in the survey~\cite{Vernikov-15}.\footnote{An extended version of this survey, which is periodically updated as new results are found and/or new articles are published, is available at http://arxiv.org/abs/1309.0228v20.}
It is natural to compare Theorem~\ref{T: cancel} with the description of cancellable elements of the lattice $\mathbb{SEM}$ that was found in 2019 \cite{Shaprynskii-Skokov-Vernikov-19}.
Theorem~\ref{T: cancel} shows that there are only five cancellable elements in the lattice $\mathbb{MON}$.
In contrast, the set of all cancellable elements of the lattice $\mathbb{SEM}$ is countably infinite.

In general, the set of cancellable elements in a lattice need not form a sublattice.
For example, the elements $x$ and $y$ of the lattice in Fig.~\ref{join is not cancellable} are cancellable but their join $x\vee y$ is not.
However, the class of all cancellable elements of $\mathbb{SEM}$ forms a distributive sublattice of $\mathbb{SEM}$; see Corollary~3.14 in the extended version of the survey~\cite{Vernikov-15}.
Theorem~\ref{T: cancel} shows that the same is true for monoid varieties too; in fact, the five cancellable elements in $\mathbb{MON}$ constitute a chain.

\begin{figure}[htb]
\begin{center}
\unitlength=1mm
\linethickness{0.4pt}
\begin{picture}(60,31)

\put(30,0){\circle*{1.33}}

\put(10,10){\circle*{1.33}}
\put(50,10){\circle*{1.33}}

\put(30,10){\circle*{1.33}}
\put(30,20){\circle*{1.33}}

\put(20,15){\circle*{1.33}}
\put(40,15){\circle*{1.33}}

\put(10,20){\circle*{1.33}}
\put(50,20){\circle*{1.33}}
\put(30,30){\circle*{1.33}}
\gasset{AHnb=0,linewidth=0.4}
\drawline(30,0)(10,10)(10,20)(30,10)(50,20)(50,10)(30,0)
\drawline(10,20)(30,30)(50,20)
\drawline(30,0)(30,10)
\drawline(30,30)(30,20)(20,15)
\drawline(30,20)(40,15)
\put(19,12.5){\makebox(0,0){$x$}}
\put(41,12.5){\makebox(0,0){$y$}}
\end{picture}
\caption{}
\label{join is not cancellable}
\end{center}
\end{figure}

Now since the chain $\mathbf T \subset \mathbf{SL} \subset \mathbf C_2 \subset \mathbf D$ coincides with the lattice $\mathfrak L(\mathbf D)$ of subvarieties of $\mathbf D$ (see Fig.~\ref{L(R vee dual to R)}),
a monoid variety $\mathbf V$ is a cancellable element of the lattice $\mathbb{MON}$ if and only if either $\mathbf V \subseteq \mathbf D$ or $\mathbf V = \mathbf{MON}$.
It is routinely verified that the variety $\mathbf D$ can be given by the single identity $x^3yz \approx yxzx$.
Therefore it is easy to check the cancellability of proper elements of the lattice $\mathbb{MON}$; a monoid variety is \textit{proper} if it is different from $\mathbf{MON}$.

\begin{corollary}
Suppose that $M$ is any monoid that generates a proper subvariety $\mathbf V$ of $\mathbf{MON}$.
Then $\mathbf V$ is a cancellable element of the lattice $\mathbb{MON}$ if and only if $M$ satisfies the identity $x^3yz \approx yxzx$.
\end{corollary}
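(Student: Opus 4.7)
The plan is to obtain the corollary as a one-line consequence of Theorem~\ref{T: cancel} together with the two facts stated in the paragraph preceding it: that the chain $\mathbf T \subset \mathbf{SL} \subset \mathbf C_2 \subset \mathbf D$ coincides with the subvariety lattice $\mathfrak L(\mathbf D)$, and that $\mathbf D$ is defined by the single identity $x^3yz \approx yxzx$. So almost all of the substantive work is already packaged into Theorem~\ref{T: cancel}; the corollary is really a translation into a finite-checkable criterion on a generating monoid $M$.

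Concretely, I would proceed in three short steps. First, apply Theorem~\ref{T: cancel}: since $\mathbf V$ is assumed to be a proper subvariety of $\mathbf{MON}$, the variety $\mathbf V$ is cancellable if and only if $\mathbf V$ is one of $\mathbf T$, $\mathbf{SL}$, $\mathbf C_2$, $\mathbf D$. Second, use the description of $\mathfrak L(\mathbf D)$: the four listed varieties are exactly the subvarieties of $\mathbf D$, so the previous condition is equivalent to the inclusion $\mathbf V \subseteq \mathbf D$. Third, translate to $M$: as $\mathbf V$ is generated by $M$, the inclusion $\mathbf V \subseteq \mathbf D$ is equivalent to $M$ satisfying every identity of $\mathbf D$, and since $\mathbf D = \var\{x^3yz \approx yxzx\}$, this is in turn equivalent to $M \models x^3yz \approx yxzx$.

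The only place where any actual verification is required is the assertion $\mathbf D = \var\{x^3yz \approx yxzx\}$. I would dispatch this by the following routine substitutions. From $x^3yz \approx yxzx$, setting $y = z = 1$ yields $x^3 \approx x^2$; setting $y = 1$ yields $x^3z \approx xzx$, hence $x^2z \approx xzx$; and setting $z = 1$ yields $x^3y \approx yx^2$, hence $x^2y \approx yx^2$. Conversely, given $x^2 \approx x^3$ and $x^2y \approx xyx \approx yx^2$, one computes
\[
x^3yz \approx x^2 \cdot yz \approx yz \cdot x^2 \approx y(zx^2) \approx y(xzx) = yxzx,
\]
using the identity $zx^2 \approx xzx$ on the letter~$z$. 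This closes the equivalence of axiom systems and hence the corollary.

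There is no real obstacle: every nontrivial ingredient (the classification of cancellable elements, the structure of $\mathfrak L(\mathbf D)$) is supplied by Theorem~\ref{T: cancel} and the surrounding discussion. The only thing to be careful about is the use of the monoid identity $1$, which is legitimate here because we are working in the variety of monoids with a $0$-ary operation picking out $1$, so substitution of $1$ for a variable is a valid derivation step.
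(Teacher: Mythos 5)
Your proposal is correct and follows the same route the paper intends: deduce from Theorem~\ref{T: cancel} and the fact that $\mathfrak L(\mathbf D)$ is the chain $\mathbf T\subset\mathbf{SL}\subset\mathbf C_2\subset\mathbf D$ that cancellability of a proper $\mathbf V$ is equivalent to $\mathbf V\subseteq\mathbf D$, and then use $\mathbf D=\var\{x^3yz\approx yxzx\}$ (which the paper leaves as ``routinely verified'' and you check correctly by substituting $1$ in both directions). No gaps.
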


The article consists of five sections.
Section~\ref{Sec: preliminaries} contains definitions, notation, certain known results and their simple corollaries.
In Section~\ref{Sec: modular but non-distributive}, the first example of a monoid variety with modular but non-distributive subvariety lattice is given.
In Section~\ref{Sec: necessary condition}, a necessary condition of the modularity of an element in $\mathbb{MON}$ is established in Proposition~\ref{P: identities for mod}.
Results from Sections~\ref{Sec: modular but non-distributive} and~\ref{Sec: necessary condition} will then be used in Section~\ref{Sec: proof of main result} to prove Theorem~\ref{T: cancel}.

\section{Preliminaries}
\label{Sec: preliminaries}

Acquaintance with rudiments of universal algebra is assumed of the reader.
Refer to the monograph~\cite{Burris-Sankappanavar-81} for more information.

Recall that a variety is \textit{periodic} if it consists of periodic monoids.
Equivalently, a variety is periodic if and only if it satisfies the identity $x^n \approx x^{n+m}$ for some $n,m \ge 1$.
For any word $\mathbf w$ and any set $X$ of letters, the word obtained from $\mathbf w$ by deleting all the letters of $X$ is denoted by $\mathbf w_X$.
The \textit{content} of a word $\mathbf w$, denoted by $\con(\mathbf w)$, is the set of letters occurring in $\mathbf w$.
The partition lattice over a set $X$ is denoted by $\Part(X)$.
Let $\mathcal L_{\FIC(\mathfrak X^\ast)}$ denote the lattice of all fully invariant congruences on the monoid $\mathfrak X^\ast$, and for any variety $\mathbf V$ of monoids, let $\FIC(\mathbf V)$ denote the fully invariant congruence on $\mathfrak X^\ast$ corresponding to $\mathbf V$.
It is well known that the mapping $\FIC\colon\mathbb{MON}\longrightarrow\mathcal L_{\FIC}(\mathfrak X^\ast)$ is an anti-isomorphism of lattices; see \cite[Theorem~11.9 and Corollary~14.10]{Burris-Sankappanavar-81}, for instance.
For any $\mathbf u, \mathbf v \in \mathfrak X^+$, we put $\mathbf u\preceq \mathbf v$ if $\mathbf v = \mathbf a\xi(\mathbf u)\mathbf b$ for some words $\mathbf a, \mathbf b \in \mathfrak X^\ast$ and some endomorphism $\xi$ of $\mathfrak X^+$.
It is easily seen that the relation $\preceq$ on $\mathfrak X^+$ is a quasi-order.
For an arbitrary anti-chain $A\subseteq \mathfrak X^+$ under the relation $\preceq$, let $\mathrm L_A$ denote the set of all monoid varieties $\mathbf V$ for which $A$ is a union of $\FIC(\mathbf V)$-classes.
Define the map $\varphi_A\colon\mathrm L_A\longrightarrow \Part(A)$ by the rule $\varphi_A(\mathbf V)=\FIC(\mathbf V)|_A$ for any $\mathbf V\in\mathrm L_A$.

\begin{lemma}[{\!\cite[Lemma~3]{Gusev-18IzVUZ}}]
\label{L: anti-hom}
Let $A$ be any anti-chain under the quasi-order $\preceq$.
Suppose that for any words $\mathbf u,\mathbf v\in A$ and any nonempty set $X\subseteq \con(\mathbf u)$, the equalities $\con(\mathbf u)=\con(\mathbf v)$ and $\mathbf u_X = \mathbf v_X$ hold.
Then
\begin{itemize}
\item[\textup{(i)}] the set $\mathrm L_A$ is a sublattice of the lattice $\mathbb{MON}$;
\item[\textup{(ii)}] the map $\varphi_A$ is a surjective anti-homomorpism of the lattice $\mathrm L_A$ onto the lattice $\Part(A)$;
\item[\textup{(iii)}] for any partition $\beta\in\Part(A)$, there exists a non-periodic monoid variety $\mathbf V\in \mathrm L_A$ such that $\varphi_A(\mathbf V)=\beta$.
\end{itemize}
\end{lemma}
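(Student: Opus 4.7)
The plan is to translate everything to the congruence side via the lattice anti-isomorphism $\FIC\colon\mathbb{MON}\to\mathcal L_{\FIC(\mathfrak X^\ast)}$. Writing $\rho_{\mathbf V}=\FIC(\mathbf V)$, the collection
\[
\mathcal R_A=\bigl\{\rho\in\mathcal L_{\FIC(\mathfrak X^\ast)}:A\text{ is a union of }\rho\text{-classes}\bigr\}
\]
is exactly the $\FIC$-image of $\mathrm L_A$, and $\varphi_A=\pi_A\circ\FIC$, where $\pi_A\colon\mathcal R_A\to\Part(A)$ sends $\rho$ to its restriction to $A$. Items (i) and (ii) will follow once I check that $\mathcal R_A$ is closed under meets and joins in $\mathcal L_{\FIC(\mathfrak X^\ast)}$ and that $\pi_A$ is a lattice homomorphism; (iii) will supply the surjectivity needed in (ii).

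For (i) and the homomorphism portion of (ii), closure of $\mathcal R_A$ under intersection is immediate. For the join, $\rho_1\vee\rho_2$ consists of pairs connected by a chain of alternating $\rho_1$- and $\rho_2$-steps, and starting from $\mathbf u\in A$ every such step moves within the $\rho_i$-class of a word in $A$, so the whole chain stays in $A$. This argument simultaneously shows that $\pi_A(\rho_1\vee\rho_2)=\pi_A(\rho_1)\vee\pi_A(\rho_2)$ in $\Part(A)$, while preservation of meets by $\pi_A$ is trivial. Composing with the anti-isomorphism $\FIC$ then yields (i) and the anti-homomorphism claim of (ii).

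For (iii), given $\beta\in\Part(A)$ I take
\[
\Sigma_\beta=\bigl\{\mathbf u\approx\mathbf v:\mathbf u,\mathbf v\in A,\ (\mathbf u,\mathbf v)\in\beta\bigr\},\qquad\mathbf V_\beta=\var\Sigma_\beta,
\]
so that $\beta\subseteq\rho_{\mathbf V_\beta}|_A$ tautologically. It remains to establish the reverse inclusion and $\mathbf V_\beta\in\mathrm L_A$ by inspecting a single rewriting step: let $\mathbf w\in A$ and suppose $\mathbf w=\mathbf a\,\sigma(\mathbf u')\,\mathbf b$, $\mathbf w'=\mathbf a\,\sigma(\mathbf v')\,\mathbf b$ with $(\mathbf u'\approx\mathbf v')\in\Sigma_\beta$, $\sigma$ an endomorphism of $\mathfrak X^\ast$, and $\mathbf a,\mathbf b\in\mathfrak X^\ast$. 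Let $X\subseteq\con(\mathbf u')=\con(\mathbf v')$ be the set of letters sent by $\sigma$ to the empty word. If $X\ne\varnothing$, then $\sigma(\mathbf u')$ depends only on $\mathbf u'_X$, which equals $\mathbf v'_X$ by hypothesis, so $\mathbf w=\mathbf w'$ and the step is trivial. If $X=\varnothing$, then $\sigma$ is non-erasing on $\con(\mathbf u')$, giving $\mathbf u'\preceq\mathbf w$; antichainness of $A$ forces $\mathbf u'=\mathbf w$, and a length comparison then yields $\mathbf a=\mathbf b=1$ together with $\sigma$ acting as the identity on $\con(\mathbf u')$, so $\mathbf w'=\mathbf v'\in A$ and $(\mathbf w,\mathbf w')\in\beta$.

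Non-periodicity of $\mathbf V_\beta$ follows by applying the hypothesis with $X=\con(\mathbf u)\setminus\{x\}$ for each $x\in\con(\mathbf u)$: every identity of $\Sigma_\beta$ has the same multiplicity of each letter on both sides, so the infinite cyclic monoid satisfies $\Sigma_\beta$, lies in $\mathbf V_\beta$, and keeps $\mathbf V_\beta$ non-periodic. The main obstacle is the single-step analysis in the previous paragraph: one must verify that the antichain condition together with the content-and-deletion hypothesis leaves no loophole for derivations to escape $A$, and in particular handles endomorphisms that inflate individual letters to longer words. The length inequality $|\sigma(\mathbf u')|\ge|\mathbf u'|$ forced by non-erasure, combined with $\mathbf u'=\mathbf w$, is precisely what compels $\sigma$ to be the identity on $\con(\mathbf u')$ and removes this concern.
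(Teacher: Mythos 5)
Your proof is correct, and since the paper only cites this lemma from \cite[Lemma~3]{Gusev-18IzVUZ} without reproducing a proof, the natural comparison is with the standard argument, which yours is: pass to fully invariant congruences via the anti-isomorphism $\FIC$, check closure of $\mathcal R_A$ under meets and transitive-closure joins, and handle (iii) by the single-step analysis in which erasing substitutions are neutralized by the hypothesis $\mathbf u_X=\mathbf v_X$ and non-erasing ones are forced to be trivial by the antichain condition and a length count. The only point worth a remark is the degenerate case $|{\con(\mathbf u)}|=1$ in your non-periodicity argument (where $X=\con(\mathbf u)\setminus\{x\}$ is empty), but there the antichain $A$ is a singleton and $\Sigma_\beta$ is trivial, so nothing breaks.
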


Recall that a band is \textit{left regular} if it is a semilattice of left zero bands.
It is well known that the class of left regular band monoids coincides with the variety \[ \mathbf{LRB}=\var\{xy \approx xyx\}. \]
The \textit{initial part} of a word $\mathbf w$, denoted by $\ini(\mathbf w)$, is the word obtained from $\mathbf w$ by retaining the first occurrence of each letter.
The following assertion is well known and easily verified.

\begin{lemma}
\label{word problem LRB}
An identity $\mathbf u \approx \mathbf v$ holds in $\mathbf{LRB}$ if and only if $\ini(\mathbf u) = \ini(\mathbf v)$.
\end{lemma}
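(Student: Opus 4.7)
My plan is to reduce both directions of the lemma to a single normal-form claim: in $\mathbf{LRB}$, every word $\mathbf w \in \mathfrak X^\ast$ satisfies $\mathbf w \approx \ini(\mathbf w)$. Once this is in hand, the ``if'' direction is immediate by transitivity, and the ``only if'' direction reduces to exhibiting one monoid in $\mathbf{LRB}$ in which words with different initial parts take different values.

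For the normal form, I would first derive the stronger identity $\mathbf a \mathbf b \mathbf c \approx \mathbf a \mathbf b \mathbf a \mathbf c$ in $\mathbf{LRB}$ by substituting $x \mapsto \mathbf a$ and $y \mapsto \mathbf b$ in the defining identity $xy \approx xyx$ and then right-multiplying by $\mathbf c$. Specializing $\mathbf a$ to a single letter $x$ with $x \notin \con(\mathbf b)$ yields $x \mathbf b \mathbf c \approx x \mathbf b x \mathbf c$, the precise rule needed to eliminate a second occurrence of a letter. Given any $\mathbf w$ with some repeated letter, I would locate the two leftmost occurrences of some letter $x$, factor $\mathbf w = \mathbf p x \mathbf q x \mathbf r$ with $x \notin \con(\mathbf q)$, and apply the previous instance after left-multiplying by $\mathbf p$, obtaining $\mathbf w \approx \mathbf p x \mathbf q \mathbf r$. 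This strictly decreases $|\mathbf w| - |\ini(\mathbf w)|$, so an induction on this quantity completes the reduction.

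For the ``only if'' direction, I would work with the concrete monoid $F$ whose elements are all words over $\mathfrak X$ in which every letter occurs at most once, the identity is the empty word, and the product is defined by $\mathbf s \cdot \mathbf t := \ini(\mathbf s \mathbf t)$. Associativity follows from the routine observation $\ini(\ini(\mathbf s \mathbf t)\mathbf r) = \ini(\mathbf s \mathbf t \mathbf r)$, and the inclusion $F \in \mathbf{LRB}$ from the computation $\mathbf s \cdot \mathbf t \cdot \mathbf s = \ini(\mathbf s \mathbf t \mathbf s) = \ini(\mathbf s \mathbf t)$, since the trailing copy of $\mathbf s$ introduces no new letters. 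Under the substitution sending each letter of $\mathfrak X$ to itself as a length-one element of $F$, any word $\mathbf w$ evaluates to $\ini(\mathbf w)$, so any identity $\mathbf u \approx \mathbf v$ valid in $\mathbf{LRB}$ is valid in $F$ and forces $\ini(\mathbf u) = \ini(\mathbf v)$.

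No step here presents a genuine obstacle; the mildly delicate point is setting up the induction for the normal-form claim, but once the derived identity $\mathbf a \mathbf b \mathbf c \approx \mathbf a \mathbf b \mathbf a \mathbf c$ is available, the rest is routine bookkeeping. This matches the paper's characterization of the lemma as well known and easily verified.
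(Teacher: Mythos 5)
Your proof is correct and complete; the paper itself omits the argument, stating only that the lemma is well known and easily verified. Your two-step reduction --- rewriting every word to its initial part via the derived identity $\mathbf a\mathbf b\mathbf c \approx \mathbf a\mathbf b\mathbf a\mathbf c$ (with the induction on $|\mathbf w|-|\ini(\mathbf w)|$ correctly set up), and separating distinct initial parts with the free left regular band monoid of repetition-free words under $\mathbf s\cdot\mathbf t=\ini(\mathbf s\mathbf t)$ --- is exactly the standard verification the authors have in mind.
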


For any $n\ge2$, the variety \[ \mathbf C_n = \var\{x^n\approx x^{n+1},\, xy\approx yx\} \] is generated by the monoid $\langle a, 1 \mid a^n = 0 \rangle$ \cite[Corollary~6.1.5]{Almeida-94}.
Note that the variety $\mathbf C_2$ has already been introduced in Section~\ref{Sec: introduction}.
A word $\mathbf w$ is an \textit{isoterm} for a variety $\mathbf V$ if the $\FIC(\mathbf V)$-class of $\mathbf w$ is singleton.
The following result is easily deduced from \cite[Lemma~3.3]{Jackson-05}.

\begin{lemma}
\label{L: x^n is isoterm}
Let $n \ge 1$. For any monoid variety $\mathbf V$, the following are equivalent:
\begin{itemize}
\item[a)] $x^n$ is not an isoterm for $\mathbf V$;
\item[b)] $\mathbf V$ satisfies the identity $x^n \approx x^{n+m}$ for some $m \ge 1$;
\item[c)] $\mathbf C_{n+1}\nsubseteq\mathbf V$.
\end{itemize}
\end{lemma}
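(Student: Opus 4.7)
The plan is to run the cycle $(b)\Rightarrow(a)\Rightarrow(b)$ directly and then establish $(b)\Leftrightarrow(c)$ using the generating monoid $M=\langle a,1\mid a^{n+1}=0\rangle$ of $\mathbf C_{n+1}$ as the bridge.

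The implication $(b)\Rightarrow(a)$ is trivial: the distinct words $x^n$ and $x^{n+m}$ lie in the same $\FIC(\mathbf V)$-class. For $(a)\Rightarrow(b)$, I would start from a word $\mathbf w\ne x^n$ with $\mathbf V\models x^n\approx\mathbf w$, let $c$ be the number of occurrences of $x$ in $\mathbf w$ and $r$ the total number of occurrences of the other letters, and then apply two natural endomorphisms of $\mathfrak X^\ast$ — one fixing $x$ and sending every other letter to the empty word, the other sending every letter to $x$ — to deduce $\mathbf V\models x^n\approx x^c$ and $\mathbf V\models x^n\approx x^{c+r}$. Since $\mathbf w\ne x^n$ forces either $c\ne n$ or $r\ge1$, at least one of these identities is nontrivial, and multiplying through by an appropriate power of $x$ puts it in the required shape $x^n\approx x^{n+m}$.

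For $(b)\Rightarrow(c)$, the point is that in $M$ one has $a^n\ne 0=a^{n+m}$ for every $m\ge1$, so the identity $x^n\approx x^{n+m}$ fails in $M$ and hence in $\mathbf C_{n+1}$; any variety satisfying such an identity therefore cannot contain $\mathbf C_{n+1}$.

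The substantive direction is $(c)\Rightarrow(b)$, which I would prove by contraposition. Assuming $\mathbf V$ satisfies none of the identities $x^n\approx x^{n+m}$, the goal is to verify that every identity $\mathbf u\approx\mathbf v$ of $\mathbf V$ holds in $M$, whence $M\in\mathbf V$ and so $\mathbf C_{n+1}\subseteq\mathbf V$ (since $M$ generates $\mathbf C_{n+1}$). Given any homomorphism $\varphi\colon\mathfrak X^\ast\to M$, write $\varphi(x_i)=a^{k_i}$, so that $\varphi(\mathbf u)=a^{K_{\mathbf u}}$ and $\varphi(\mathbf v)=a^{K_{\mathbf v}}$ for the evident weighted exponent sums. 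Substituting $x^{k_i}$ for $x_i$ in $\mathbf u\approx\mathbf v$ yields $\mathbf V\models x^{K_{\mathbf u}}\approx x^{K_{\mathbf v}}$; if the two exponents differed and $\min(K_{\mathbf u},K_{\mathbf v})\le n$, then multiplying by a suitable power of $x$ would produce the forbidden identity $x^n\approx x^{n+m}$. Hence either $K_{\mathbf u}=K_{\mathbf v}$, or both exceed $n$, so in either case $\varphi(\mathbf u)=\varphi(\mathbf v)$ in $M$. The main obstacle is precisely this step: recognising that a single-variable substitution collapses an arbitrary multi-letter identity of $\mathbf V$ to one between powers of $x$, which is exactly what the truncated cyclic structure of $M$ is designed to test.
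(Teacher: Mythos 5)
Your proof is correct. The paper itself offers no argument here: it simply remarks that the lemma ``is easily deduced from \cite[Lemma~3.3]{Jackson-05}'', which already packages the relationship between $x^n$ being an isoterm and membership of the monoid $\langle a,1\mid a^{n+1}=0\rangle$ in $\mathbf V$. You instead give a self-contained proof, and all the steps hold up: the deletion/identification endomorphisms reduce any nontrivial identity $x^n\approx\mathbf w$ to one between powers of $x$, and padding by a power of $x$ (using $n\le\max$ or $\min<n$ as appropriate) always lands you in the shape $x^n\approx x^{n+m}$ with $m\ge1$; the direction $(b)\Rightarrow(c)$ is immediate from $a^n\ne0=a^{n+m}$ in the generating monoid; and in $(c)\Rightarrow(b)$ the observation that every element of $M$ is a power of $a$ (including $1=a^0$ and $0=a^{n+1}$) together with commutativity lets a single-variable substitution test any identity of $\mathbf V$ against $M$, with the dichotomy ``equal exponents or both exceed $n$'' doing exactly the right work. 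The only difference from the paper is organizational: the authors lean on Jackson's lemma as a black box, whereas you reprove the content of that lemma directly; your route costs a page of elementary substitution arguments but removes the external dependency.
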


A monoid is \textit{completely regular} if it is a union of its maximal subgroups.
A variety is \textit{completely regular} if it consists of completely regular monoids.
It is well known that a monoid variety is completely regular if and only if it satisfies the identity $x\approx x^{n+1}$ for some $n \ge 1$.

\begin{lemma}[{\!\cite[Lemma~2.14]{Gusev-Vernikov-18}}]
\label{L: non-cr and non-commut}
If a monoid variety $\mathbf V$ is non-completely regular and noncommutative, then $\mathbf D\subseteq \mathbf V$.
\end{lemma}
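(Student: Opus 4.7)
The plan is to show $\mathbf D\subseteq\mathbf V$ by verifying that every identity of $\mathbf V$ is also an identity of $\mathbf D$. First, since $\mathbf V$ is non-completely regular, Lemma~\ref{L: x^n is isoterm} applied with $n=1$ implies that $x$ is an isoterm for $\mathbf V$, equivalently $\mathbf C_2\subseteq\mathbf V$. I would use this as follows: for any identity $\mathbf u\approx\mathbf v$ of $\mathbf V$, substituting $1$ for every letter except a chosen letter $z$ gives an identity of the form $z^k\approx z^{k'}$, where $k$ and $k'$ are the numbers of occurrences of $z$ in $\mathbf u$ and $\mathbf v$ respectively. If one of $k,k'$ equals $0$ or $1$ and the two differ, then a short manipulation (multiplying by $z$ if necessary) yields a nontrivial identity $z\approx z^\ell$ with $\ell\geq 2$, contradicting that $x$ is an isoterm for $\mathbf V$. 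Thus each identity $\mathbf u\approx\mathbf v$ of $\mathbf V$ preserves, for every letter $z$, whether $z$ occurs $0$, exactly $1$, or at least $2$ times.

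Next, I would use the assumption that $\mathbf V$ is noncommutative to control the order of the letters occurring exactly once. Given any identity $\mathbf u\approx\mathbf v$ of $\mathbf V$ and any two letters $p,q$ each occurring exactly once in both $\mathbf u$ and $\mathbf v$ (the symmetry of occurrences follows from the first step), substituting $1$ for every other letter gives an identity of $\mathbf V$ on $\{p,q\}$ in which each letter occurs exactly once on either side. If the relative order of $p$ and $q$ were reversed between $\mathbf u$ and $\mathbf v$, the resulting identity would be $pq\approx qp$, forcing $\mathbf V$ to be commutative---a contradiction. Therefore, letting $X$ denote the set of letters occurring at least twice in $\mathbf u$ (equivalently in $\mathbf v$), the subwords $\mathbf u_X$ and $\mathbf v_X$ (the sequences of once-occurring letters, in their order of appearance) are equal.

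Finally, it remains to show that any two words $\mathbf u,\mathbf v$ sharing the data $(X,\mathbf w_X)$ are equivalent in $\mathbf D$. A routine induction using the defining identities $x^2\approx x^3$ and $x^2y\approx xyx\approx yx^2$ reduces each word $\mathbf w$ in $\mathbf D$ to the normal form $\mathbf w_X\cdot\prod_{x\in X}x^2$: a second occurrence of a letter $x$ can be collected into a factor $x^2$ using $xyx\approx x^2y$, any further occurrence is absorbed via $x^2\approx x^3$, and the resulting $x^2$ is central (by $x^2y\approx yx^2$) and so can be moved to the end of the word. Thus $\mathbf u$ and $\mathbf v$ collapse to the same normal form in $\mathbf D$, and $\mathbf D\models\mathbf u\approx\mathbf v$. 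The main technical point is the verification of this $\mathbf D$-normal form; once it is in hand, the two substitution arguments above close the proof.
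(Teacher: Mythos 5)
Your proposal is correct, and it is worth noting at the outset that the paper itself offers no proof of this statement: Lemma~\ref{L: non-cr and non-commut} is imported verbatim from \cite[Lemma~2.14]{Gusev-Vernikov-18}, so there is no in-paper argument to compare against. Your self-contained proof follows the natural route and all three steps check out. Non-complete regularity is exactly the failure of $x\approx x^{n+1}$ for all $n$, so by Lemma~\ref{L: x^n is isoterm} the letter $x$ is an isoterm for $\mathbf V$; your substitution of $1$ for all but one letter then correctly shows that every identity of $\mathbf V$ preserves, for each letter, the trichotomy ``occurs $0$ times / exactly once / at least twice'' (the case $k=0$, $k'\geq 1$ needs the extra multiplication by $z$ to reach $z\approx z^{k'+1}$, which you supply). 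The noncommutativity hypothesis is used exactly where it must be: an identity $pq\approx qp$ in two distinct letters each occurring once is, up to renaming, the commutative law, so the relative order of simple letters is rigid, giving $\mathbf u_X=\mathbf v_X$. Finally, your normal form $\mathbf w_X\cdot\prod_{x\in X}x^2$ for $\mathbf D$ is valid: $xyx\approx x^2y$ collects the first two occurrences of a multiple letter into a square, $x^2y\approx yx^2$ makes that square central, and $x^2\approx x^3$ absorbs further occurrences, so two words of $\mathbf V$-identity type share a normal form and the identity holds in $\mathbf D$, whence $\mathbf D\subseteq\mathbf V$. This combination of an isoterm argument with the solution of the word problem for $\mathbf D$ is essentially the standard proof of this fact in the literature; your write-up is a complete and correct reconstruction of it.
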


\begin{lemma}
\label{L: does not contain E}
Let $\mathbf V$ be any monoid variety such that $\mathbf C_2\subseteq\mathbf V$.
Suppose that $\mathbf V$ does not contain the variety \[ \mathbf E=\var\{x^2 \approx x^3,\, x^2y \approx xyx,\, x^2y^2 \approx y^2x^2\}. \]
Then $\mathbf V$ satisfies the identity $x^pyx^q \approx yx^r$ for some $p,q \ge 1$ and $r \ge 2$.
\end{lemma}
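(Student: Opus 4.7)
The strategy is to prove the contrapositive. Assume $\mathbf{V}$ contains $\mathbf{C}_2$ and satisfies no identity of the form $x^p y x^q \approx y x^r$ with $p, q \ge 1$ and $r \ge 2$; we deduce $\mathbf{E} \subseteq \mathbf{V}$. As a preliminary reduction, if $\mathbf V$ were commutative it would satisfy $xyx \approx yx^2$, which is the forbidden identity with $p = q = 1$ and $r = 2$; hence $\mathbf V$ is non-commutative. Since $\mathbf{C}_2 \subseteq \mathbf V$, Lemma~\ref{L: x^n is isoterm} with $n = 1$ rules out complete regularity, and Lemma~\ref{L: non-cr and non-commut} then yields $\mathbf D \subseteq \mathbf V$, locating $\mathbf V$ above $\mathbf D$ in the lattice.

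To establish $\mathbf E \subseteq \mathbf V$, I plan to verify that every identity $\mathbf u \approx \mathbf v$ of $\mathbf V$ also holds in $\mathbf E$. Suppose for contradiction that some identity of $\mathbf V$ fails in $\mathbf E$. Since $\mathbf E$ is locally finite (because of $x^2 \approx x^3$ together with the structural identities), a suitable substitution of variables by words in $\{x, y\}^\ast$ produces a $2$-variable identity $\mathbf u' \approx \mathbf v'$ that still holds in $\mathbf V$ but fails in $\mathbf E$. The key step is to enumerate the finitely many $\FIC(\mathbf E)$-equivalence classes of $2$-letter words; of these, the crucial observation is that the class of $xyx$ (which, using the defining identities of $\mathbf E$, consists of all $x^a y x^b$ with $a \ge 1$ and $a + b \ge 2$) is disjoint from the class of $yx^2$ (which consists of all $y x^b$ with $b \ge 2$), and similarly after the swap $x \leftrightarrow y$.

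The identity $\mathbf u' \approx \mathbf v'$ now amounts to merging two distinct such classes in $\mathbf V$. Because $\mathbf V$ satisfies every identity of $\mathbf C_2$, the words $\mathbf u'$ and $\mathbf v'$ must agree in content and in the set of letters occurring exactly once. A direct check shows that only three pairs of distinct classes survive this restriction: (i) $\{xy\} \leftrightarrow \{yx\}$, which would force $\mathbf V$ to be commutative, contradicting the preliminary reduction; (ii) the class of $xyx$ merged with the class of $yx^2$, giving $\mathbf V \models x^a y x^b \approx y x^c$ for some $a \ge 1$, $a + b \ge 2$, $c \ge 2$; and (iii) the mirror of (ii) under $x \leftrightarrow y$. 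In case (ii), if $b \ge 1$ the identity is already of the desired shape; if $b = 0$ (so $a \ge 2$) then substituting $y \mapsto yx$ yields $x^a y x \approx y x^{c+1}$, again of the desired shape. Case (iii) reduces to (ii) by renaming the variables. In each case we obtain an identity of the form $x^p y x^q \approx y x^r$ holding in $\mathbf V$, the desired contradiction.

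The main obstacle is the enumeration of the $\FIC(\mathbf E)$-classes on the two-letter alphabet and the verification that the content-plus-once-occurrence constraints from $\mathbf{C}_2$ really do eliminate every cross-class merger other than (i)--(iii). This is a finite computation carried out directly from the three defining identities of $\mathbf E$, but it requires care to list every class and to pair them against each other exhaustively.
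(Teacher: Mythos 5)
Your preliminary reductions are fine and match the easy half of the paper's argument: if $\mathbf V$ is commutative it satisfies $xyx\approx yx^2$, and otherwise $\mathbf C_2\subseteq\mathbf V$ rules out complete regularity, so Lemma~\ref{L: non-cr and non-commut} gives $\mathbf D\subseteq\mathbf V$. But the remaining case $\mathbf D\subseteq\mathbf V$, $\mathbf E\nsubseteq\mathbf V$ is the entire content of the lemma, and here your sketch has genuine gaps. The paper does not prove this case at all; it cites Lemma~4.1 and Proposition~4.2 of \cite{Gusev-Vernikov-18}, where the work is done. Your attempt to replace that citation with a direct argument breaks down at two specific points.

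First, the reduction to a two-variable identity is unjustified. From $\mathbf E\nsubseteq\mathbf V$ you get an identity of $\mathbf V$ that fails in $\mathbf E$; to convert it into a two-variable identity of $\mathbf V$ that still fails in $\mathbf E$, you need $\mathbf E$ to be generated by its relatively free monoid of rank $2$ (equivalently: every identity failing in $\mathbf E$ has a two-variable substitution instance failing in $\mathbf E$). Local finiteness does not give this, and you neither state nor prove it. Second, and more seriously, the ``merging of classes'' step conflates two different things. Knowing that $\mathbf V\models\mathbf u'\approx\mathbf v'$ with $\mathbf u'$ in the $\FIC(\mathbf E)$-class of $xyx$ and $\mathbf v'$ in that of $yx^2$ does \emph{not} yield $\mathbf V\models x^ayx^b\approx yx^c$: to normalize $\mathbf u'$ and $\mathbf v'$ to those canonical forms you would need $\mathbf V$ to satisfy the relevant identities of $\mathbf E$, i.e.\ $\mathbf V\subseteq\mathbf E$, which is not available (the hypothesis is only $\mathbf E\nsubseteq\mathbf V$, so the $\FIC(\mathbf E)$-classes need not be unions of $\FIC(\mathbf V)$-classes). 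The classes of $\mathbf E$ on two letters contain many words not of the form $x^ayx^b$ or $yx^c$ (for instance $xyxy$, $xy^2x$, $yx^2y$ all collapse into the class of $x^2y^2$), so an exhaustive argument must handle arbitrary representatives, not just the canonical ones. Finally, the finite enumeration you describe as ``the main obstacle'' is exactly the part that is never carried out. As written, the proposal establishes only what the paper's two-line case analysis already gives and does not supply a substitute for the cited results.
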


\begin{proof}
If $\mathbf D\subseteq\mathbf V$, then the result follows from \cite[Lemma~4.1 and Proposition~4.2]{Gusev-Vernikov-18}.
Therefore suppose that $\mathbf D\nsubseteq\mathbf V$, so that by Lemma~\ref{L: non-cr and non-commut},
the variety $\mathbf V$ is either completely regular or commutative.
But $\mathbf V$ cannot be completely regular because $\mathbf C_2 \subseteq \mathbf V$.
Hence $\mathbf V$ is commutative and satisfies the identity $xyx\approx yx^2$.
\end{proof}

\section{Monoid variety with modular but non-distributive subvariety lattice}
\label{Sec: modular but non-distributive}

There are many examples of monoid varieties with non-distributive subvariety lattice; see~\cite{Gusev-18AU,Gusev-21+,Lee-12b}, for instance.
However, all these varieties have non-modular subvariety lattice as well.
In this section, we present the first example of a monoid variety whose subvariety lattice is modular but non-distributive.
To this end, the following varieties are required: the variety $\mathbf D_2$ generated by the monoid
\[
\langle a,b,1 \mid a^2=b^2=bab=0\rangle = \{ a,b,ab,ba,aba,1,0\},
\]
the variety $\mathbf R$ generated by the monoid \[ \langle a,b,1\mid a^3=b^2=ba=0\rangle=\{a,b,a^2,ab,a^2b,1,0\} \]
and the variety $\mathbf R^\delta$ dual to $\mathbf R$. It is proved in~\cite[Lemmas~2.2.8 and~2.2.9]{Jackson-99} that
\begin{align*}
\mathbf D_2=\var\{&x^3 \approx x^2,\, x^3yzt\approx yxzxtx,\\
&xyzxty \approx yxzxty,\,xzxyty \approx xzyxty,\, xtyzxy \approx xtyzyx\}, \\
\mathbf R\vee\mathbf R^\delta=\var\{&x^4\approx x^3,\,x^3yzt\approx yxzxtx,\\
&xyzxty \approx yxzxty,\,xzxyty \approx xzyxty,\, xtyzxy \approx xtyzyx\}.
\end{align*}
It is easily seen that $\mathbf D_2 = (\mathbf R\vee\mathbf R^\delta)\wedge\var\{x^3\approx x^2\}$.

\begin{proposition}
\label{P: mod not distributive}
The lattice $\mathfrak L(\mathbf R\vee\mathbf R^\delta)$ of subvarieties of $\mathbf R\vee\mathbf R^\delta$ is given in Fig.~\textup{\ref{L(R vee dual to R)}}.
In particular, this lattice is modular but not distributive.
\end{proposition}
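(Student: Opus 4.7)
The plan is to determine $\mathfrak{L}(\mathbf{R} \vee \mathbf{R}^\delta)$ as an explicit finite lattice and then read off modularity and non-distributivity from its Hasse diagram. Since $\mathbf{R}$ and $\mathbf{R}^\delta$ are each generated by a $7$-element monoid, their join is generated by a finite monoid, so by the Oates--Powell theorem $\mathbf{R} \vee \mathbf{R}^\delta$ has only finitely many subvarieties. I would start by collecting the obvious candidates: the bottom chain $\mathbf{T} \subset \mathbf{SL} \subset \mathbf{C}_2 \subset \mathbf{D} \subset \mathbf{D}_2$ (using Lemma~\ref{L: x^n is isoterm} to govern which powers of $x$ are isoterms and the defining identities of $\mathbf{D}$ and $\mathbf{D}_2$ to place $\mathbf{D} \subseteq \mathbf{D}_2$), together with $\mathbf{R}$, $\mathbf{R}^\delta$, the meet $\mathbf{R} \wedge \mathbf{R}^\delta$, the join $\mathbf{R} \vee \mathbf{R}^\delta$, and whichever self-dual intermediate varieties arise. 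A routine isoterm and normal-form verification confirms these are all distinct and pins down the inclusion relations claimed in Fig.~\ref{L(R vee dual to R)}.

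The bulk of the work is the exhaustion step: showing no subvariety has been missed. My approach would be to take an arbitrary subvariety $\mathbf{V} \subseteq \mathbf{R} \vee \mathbf{R}^\delta$ and use the identity bases for $\mathbf{R} \vee \mathbf{R}^\delta$ and $\mathbf{D}_2$ quoted from~\cite{Jackson-99} to reduce each additional identity of $\mathbf{V}$ to a normal form modulo $\mathbf{R} \vee \mathbf{R}^\delta$. Then a case split on which critical identities $\mathbf{V}$ satisfies, principally $x^2 \approx x^3$ (controlling whether $\mathbf{V} \subseteq \mathbf{D}_2$) and the asymmetric identities that distinguish $\mathbf{R}$ from $\mathbf{R}^\delta$, would force $\mathbf{V}$ to coincide with one of the listed subvarieties. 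Lemma~\ref{L: anti-hom} applied to a small anti-chain of witness words extracted from the elements of the generating monoid of $\mathbf{R}$ should furnish the bookkeeping: each admissible partition of such an anti-chain corresponds to exactly one of the subvarieties in the diagram.

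Once the Hasse diagram is in hand, modularity is a finite check, since one only needs to verify that no copy of the pentagon $N_5$ is embedded; by the $\mathbf{R} \leftrightarrow \mathbf{R}^\delta$ symmetry the candidates for such an embedding are few, and each is ruled out by direct lattice inspection. Non-distributivity is then immediate from the presence of an $M_3$ diamond, which I expect to be formed by $\mathbf{R}$, $\mathbf{R}^\delta$, and a third variety lying symmetrically between $\mathbf{R} \wedge \mathbf{R}^\delta$ and $\mathbf{R} \vee \mathbf{R}^\delta$ (a natural candidate being the join of $\mathbf{D}_2$ with a suitable commutative subvariety from the bottom chain), with the $M_3$ configuration witnessed by the pairwise equalities of joins and meets of these three. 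The principal obstacle, as above, is the exhaustion argument; everything after that is a mechanical finite verification.
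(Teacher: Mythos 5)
Your plan correctly identifies where the difficulty lies (the exhaustion step) and correctly locates the $M_3$ at the top of the lattice, but the two load-bearing claims are not established. First, the opening appeal to the Oates--Powell theorem is misplaced: that is a finite-basis theorem for varieties generated by finite \emph{groups}, and in the monoid/semigroup setting a finitely generated variety may have infinitely many (even uncountably many) subvarieties, so the finiteness of $\mathfrak L(\mathbf R\vee\mathbf R^\delta)$ cannot be assumed a priori -- it is a conclusion of the analysis, not a starting point. Second, the exhaustion argument itself is only described, and the tool you propose for it does not fit. Lemma~\ref{L: anti-hom} is an embedding device: its part~(iii) realizes every partition of the anti-chain by a \emph{non-periodic} variety, and in this paper it is used to transplant non-modular configurations from partition lattices into $\mathbb{MON}$. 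Since every subvariety of $\mathbf R\vee\mathbf R^\delta$ is periodic, there is no correspondence between ``admissible partitions of a witness anti-chain'' and the subvarieties you need to enumerate, so the proposed bookkeeping has no content. What remains of your exhaustion step -- ``reduce each additional identity to a normal form modulo $\mathbf R\vee\mathbf R^\delta$ and case-split'' -- is precisely the hard equational problem, left unsolved.

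The paper avoids this by combining three specific inputs. By Lemma~\ref{L: x^n is isoterm}, any subvariety not containing $\mathbf C_3$ satisfies $x^3\approx x^2$ and hence lies in $\mathbf D_2$, so $\mathfrak L(\mathbf R\vee\mathbf R^\delta)$ splits as the disjoint union of $\mathfrak L(\mathbf D_2)$ and the interval $[\mathbf C_3,\mathbf R\vee\mathbf R^\delta]$. The first piece is the known five-element chain $\mathbf T\subset\mathbf{SL}\subset\mathbf C_2\subset\mathbf D\subset\mathbf D_2$ by \cite[Lemmas~4.4 and~4.5]{Jackson-05}. For the second piece, \cite[Proposition~4.1]{Lee-12a} shows that every noncommutative variety in that interval is defined within $\mathbf R\vee\mathbf R^\delta$ by a subset of the three identities $xyx\approx x^2y$, $xyx\approx yx^2$, $x^2y\approx yx^2$, which bounds the interval by finitely many explicit candidates and makes the remaining verification genuinely routine. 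Without citing (or reproving) results of this strength, your argument does not close.
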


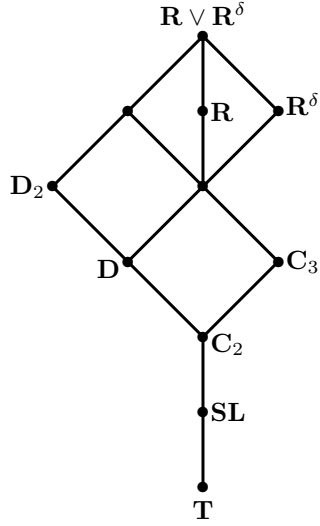
\begin{figure}[htb]
\unitlength=1mm
\linethickness{0.4pt}
\begin{center}
\begin{picture}(44,69)
\put(6,45){\circle*{1.33}}
\put(16,35){\circle*{1.33}}
\put(16,55){\circle*{1.33}}
\put(26,5){\circle*{1.33}}
\put(26,15){\circle*{1.33}}
\put(26,25){\circle*{1.33}}
\put(26,45){\circle*{1.33}}
\put(26,55){\circle*{1.33}}
\put(26,65){\circle*{1.33}}
\put(36,35){\circle*{1.33}}
\put(36,55){\circle*{1.33}}
\gasset{AHnb=0,linewidth=0.4}
\drawline(16,35)(26,45)(26,65)
\drawline(16,55)(26,45)
\drawline(26,5)(26,25)(6,45)(26,65)(36,55)(26,45)(36,35)(26,25)
\put(27,24){\makebox(0,0)[lc]{$\mathbf C_2$}}
\put(37,35){\makebox(0,0)[lc]{$\mathbf C_3$}}
\put(15,34){\makebox(0,0)[rc]{$\mathbf D$}}
\put(5,45){\makebox(0,0)[rc]{$\mathbf D_2$}}
\put(27,55){\makebox(0,0)[lc]{$\mathbf R$}}
\put(26,68){\makebox(0,0)[cc]{$\mathbf R\vee\mathbf R^\delta$}}
\put(37,56){\makebox(0,0)[lc]{$\mathbf R^\delta$}}
\put(27,15){\makebox(0,0)[lc]{$\mathbf{SL}$}}
\put(26,2){\makebox(0,0)[cc]{$\mathbf T$}}
\end{picture}
\end{center}
\caption{The subvariety lattice $\mathfrak L(\mathbf R\vee\mathbf R^\delta)$}
\label{L(R vee dual to R)}
\end{figure}

\begin{proof}
It is easily shown that $\mathbf C_3\subseteq\mathbf R\vee\mathbf R^\delta$.
According to Lemma~\ref{L: x^n is isoterm}, any subvariety $\mathbf V$ of $\mathbf R\vee\mathbf R^\delta$ such that $\mathbf C_3\nsubseteq\mathbf V$ satisfies the identity $x^3\approx x^2$, whence $\mathbf V\subseteq\mathbf D_2$.
Therefore, the lattice $\mathfrak L(\mathbf R\vee\mathbf R^\delta)$ is the disjoint union of the lattice $\mathfrak L(\mathbf D_2)$ and the interval $[\mathbf C_3,\mathbf R\vee\mathbf R^\delta]$.
It is proved in~\cite[Lemmas~4.4 and~4.5]{Jackson-05} that the lattice $\mathfrak L(\mathbf D_2)$ coincides with the 5-element chain in Fig.~\ref{L(R vee dual to R)}.
Thus it remains to describe the interval $[\mathbf C_3,\mathbf R\vee\mathbf R^\delta]$.
It follows from~\cite[Proposition~4.1]{Lee-12a} that every noncommutative variety in this interval is defined within $\mathbf R\vee\mathbf R^\delta$ by some of the identities $xyx\approx x^2y$, $xyx\approx yx^2$ or $x^2y\approx yx^2$.
It is then routinely shown that the interval $[\mathbf C_3,\mathbf R\vee\mathbf R^\delta]$ is as described in Fig.~\ref{L(R vee dual to R)}, where
\begin{align*}
\mathbf R&=(\mathbf R\vee\mathbf R^\delta)\wedge\var\{xyx\approx yx^2\},\\
\mathbf D_2\vee\mathbf C_3&=(\mathbf R\vee\mathbf R^\delta)\wedge\var\{x^2y\approx yx^2\},
\end{align*}
and $\mathbf D\vee\mathbf C_3=(\mathbf D_2\vee\mathbf C_3)\wedge\mathbf R=(\mathbf D_2\vee\mathbf C_3)\wedge\mathbf R^\delta=\mathbf R\vee\mathbf R^\delta$.
The proof of this proposition is thus complete.
\end{proof}

\section{Necessary condition of the modularity of an element in $\mathbb{MON}$}
\label{Sec: necessary condition}

Given any word $\mathbf w$ and letter $x$, let $\occ_x(\mathbf w)$ denote the number of occurrences of $x$ in $\mathbf w$.
Let $\lambda$ denote the empty word.
Let $W = W_1 \cup W_2$, where
\begin{align*}
W_1 & =\{y^{r_1}xt^{r_2}z^{r_3}y^{r_4}t^{r_5} xz^{r_6}\mid r_1,r_2,r_3,r_4,r_5,r_6\ge2\},\\
W_2 & =\{y^{r_1}xt^{r_2}z^{r_3}xy^{r_4}t^{r_5} xz^{r_6}\mid r_1,r_2,r_3,r_4,r_5,r_6\ge2\}.
\end{align*}
Let us fix the following two words: \[ \mathbf p = y^2xt^2z^2y^2t^2xz^2\ \text{ and }\ \mathbf q = y^2xt^2z^2xy^2t^2xz^2. \]
Put $\mathbf K = \var\{\mathbf p \approx \mathbf q\}$.

\begin{lemma}
\label{L: FIC(K)-class}
The set $W$ is a $\FIC(\mathbf K)$-class.
\end{lemma}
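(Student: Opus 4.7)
The plan is to establish two things: (I) every pair of words in $W$ lies in the same $\FIC(\mathbf K)$-class, and (II) no word outside $W$ is $\FIC(\mathbf K)$-equivalent to any element of $W$. Together these assert that $W$ coincides with one $\FIC(\mathbf K)$-class.

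For (I), the crucial trick is to derive $x^2 \approx x^3$ from the defining identity $\mathbf p \approx \mathbf q$. Consider the endomorphism $\xi$ of $\mathfrak X^\ast$ sending $y, t, z \mapsto \lambda$ and $x \mapsto x$. Every $\xi(y)^2, \xi(t)^2, \xi(z)^2$ collapses to $\lambda$, so $\xi(\mathbf p) = x \cdot x = x^2$ while $\xi(\mathbf q) = x \cdot x \cdot x = x^3$. Hence $\mathbf K$ satisfies $x^2 \approx x^3$, which is equivalent to $x^r \approx x^2$ for every integer $r \ge 2$. Applying this congruence rule block-by-block, any word $y^{r_1} x t^{r_2} z^{r_3} y^{r_4} t^{r_5} x z^{r_6} \in W_1$ becomes $\FIC(\mathbf K)$-equivalent to $\mathbf p$; likewise every word in $W_2$ becomes $\FIC(\mathbf K)$-equivalent to $\mathbf q$. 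Combined with the defining identity $\mathbf p \approx \mathbf q$, the whole set $W$ lies in the $\FIC(\mathbf K)$-class of $\mathbf p$.

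For (II), it suffices to show that $W$ is closed under one-step rewrites: if $\mathbf u \in W$ and $\mathbf u = \mathbf a \xi(\mathbf p) \mathbf b$, $\mathbf v = \mathbf a \xi(\mathbf q) \mathbf b$ for some endomorphism $\xi$ of $\mathfrak X^\ast$ and $\mathbf a, \mathbf b \in \mathfrak X^\ast$, then $\mathbf v \in W$ (and symmetrically with $\mathbf p, \mathbf q$ swapped). When $\xi(x) = \lambda$, one has $\xi(\mathbf p) = \xi(\mathbf q)$, so $\mathbf v = \mathbf u \in W$. When $\xi(x) \ne \lambda$, the image $\xi(\mathbf p)$ carries two copies of the nonempty word $\xi(x)$ at fixed positions. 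Because the letter $x$ occurs only $2$ or $3$ times in a word of $W$ (and between them only single-letter blocks of $y, t, z$ with exponents $\ge 2$), the two $\xi(x)$'s must align with a specific pair of $x$'s of $\mathbf u$ (or else $\xi(x)$ is entirely a word over $\{y, t, z\}$ positioned inside one of the outer blocks), and $\xi(y), \xi(t), \xi(z)$ are forced to be powers of the single letters $y, t, z$ respectively. A routine case split according to (i) whether $\mathbf u$ is in $W_1$ or $W_2$ and (ii) which $x$'s of $\mathbf u$ are matched by the $\xi(x)$'s then shows that $\mathbf v$ retains the block pattern and exponent bounds of $W$.

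The main obstacle is the case analysis in part (II). The simplification that makes it tractable is that, since every variable block of $\mathbf u \in W$ uses a single letter and has exponent $\ge 2$, the subword $\xi(\mathbf p)$ of $\mathbf u$ can be embedded in only a few essentially different ways: either via the ``main'' rewrite in which $\xi(x)$ is a single $x$ and $\xi(y), \xi(t), \xi(z)$ are respectively powers of $y, t, z$ that fill the interior blocks of $\mathbf u$ (this rewrite exchanges a $W_1$-form for the corresponding $W_2$-form), or via a ``collapsed'' rewrite in which all of $\xi(y), \xi(t), \xi(z)$ are empty and $\xi(x)$ equals one of $y, t, z$, realizing the derived identity $y^2 \approx y^3$ (or its analog) within a single block of $\mathbf u$. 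In either outcome the resulting word lies in $W$, completing the proof.
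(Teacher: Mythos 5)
Your overall strategy coincides with the paper's: reduce to a single application of $\mathbf p\approx\mathbf q$ and classify how a substitution instance $\xi(\mathbf p)$ or $\xi(\mathbf q)$ can sit inside a word of $W$. Your part (I) is correct and is in fact a point the paper leaves implicit (its proof only verifies that $W$ is a \emph{union} of $\FIC(\mathbf K)$-classes); the derivation of $x^2\approx x^3$ from $\mathbf p\approx\mathbf q$ by killing $y,t,z$ is a clean way to see that all of $W$ collapses to the class of $\mathbf p$.

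The problem is part (II): the statement that the only possible one-step rewrites are the ``main'' one and the ``collapsed'' one is precisely the content of the lemma, and you assert it rather than prove it. Concretely, three things need justification and are nontrivial. First, that each $\xi(a)$ is empty or a power of a single letter (the paper gets this from the observation that every two-letter subword $ab$ with $a\ne b$ occurs exactly once in a word of $W$, while every letter of $\mathbf p$, $\mathbf q$ is multiple). Second, that $\xi(x)$ cannot be a power of $y$, $z$ or $t$ while some other $\xi(a)$ is nonempty with a different letter: ruling out $\xi(x)$ a power of $t$, for instance, takes a two-stage argument in the paper (forcing $\xi(t^2z^2x^dy^2t^2)=t^pz^{\ell_3}x^cy^{\ell_4}t^q$, then deriving $x\notin\con(\xi(xt^2z^2x^d))$, a contradiction). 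Third, and most easily overlooked, when $\mathbf u\in W_2$ and $\xi(x)=x$ one must exclude the possibility that one of the three $x$'s of $\mathbf u$ is absorbed into the context $\mathbf a$ or $\mathbf b$ rather than into $\xi(\mathbf s)$; this is a genuine subcase (the paper's ``$x\in\con(\mathbf b)$'' analysis) that your dichotomy does not mention. There are also two minor inaccuracies in your description of the collapsed case: $\xi(y),\xi(t),\xi(z)$ need not all be empty (they may be powers of the \emph{same} letter as $\xi(x)$, which still keeps the rewrite inside one block), and that block need not be an outer one. None of this suggests the approach would fail --- it is the paper's approach --- but the ``routine case split'' you defer is where essentially all of the work of the lemma lies.
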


\begin{proof}
Let $\mathbf u \approx \mathbf v$ be any identity of $\mathbf K$ with $\mathbf u\in W$.
We need to verify that $\mathbf v\in W$.
By assumption, there is a deduction of the identity $\mathbf u\approx\mathbf v$ from the identity $\mathbf p\approx\mathbf q$, that is, a sequence $\mathbf w_0, \mathbf w_1, \ldots, \mathbf w_m$ of words such that $\mathbf w_0=\mathbf u$, $\mathbf w_m=\mathbf v$ and, for each $i=0,1,\dots,m-1$, there are words $\mathbf a_i,\mathbf b_i\in \mathfrak X^\ast$ and an endomorphism $\xi_i$ of $\mathfrak X^\ast$ such that $\mathbf w_i=\mathbf a_i\xi_i(\mathbf s_i)\mathbf b_i$ and $\mathbf w_{i+1}=\mathbf a_i\xi_i(\mathbf t_i)\mathbf b_i$, where $\{\mathbf s_i,\mathbf t_i\}=\{\mathbf p,\mathbf q\}$.
By trivial induction on $m$, it suffices to only consider the case when $\mathbf u=\mathbf a\xi(\mathbf s)\mathbf b$ and $\mathbf v=\mathbf a\xi(\mathbf t)\mathbf b$ for some words $\mathbf a,\mathbf b\in \mathfrak X^\ast$, an endomorphism $\xi$ of $\mathfrak X^\ast$ and words $\mathbf s$ and $\mathbf t$ such that $\{\mathbf s,\mathbf t\}=\{\mathbf p,\mathbf q\}$.

Since any subword of $\mathbf u$ of the form $ab$, where $a$ an $b$ are distinct letters, occurs only once in $\mathbf u$ and all letters occurring in $\mathbf s$ are multiple, the following holds:
\begin{enumerate}
\item[(I)] For any $a\in\con(\mathbf s)$, either $\xi(a) = \lambda$ or $\xi(a)$ is a power of some letter.
\end{enumerate}
Further, since $\occ_x(\mathbf u)\le 3$ and $\occ_y(\mathbf s)=\occ_z(\mathbf s)=\occ_t(\mathbf s)=4$, we have
\begin{enumerate}
\item[(II)] $x \notin \con(\xi(yzt))$.
\end{enumerate}

We note that if $\xi(\mathbf s)=\lambda$ or $\xi(\mathbf s)$ is a power of some letter, then the required statement is evident.
So, we may assume that
\begin{enumerate}
\item[(III)] $|\con(\xi(\mathbf s))|\ge2$.
\end{enumerate}
Let $\mathbf u=y^{\ell_1}xt^{\ell_2}z^{\ell_3}x^cy^{\ell_4}t^{\ell_5} xz^{\ell_6}$, where $c\in\{0,1\}$ and $\ell_1,\ell_2,\ell_3,\ell_4,\ell_5,\ell_6\ge2$, and let
\[
d =
\begin{cases}
0&\text{if }\mathbf s=\mathbf p,\\
1&\text{if }\mathbf s=\mathbf q.
\end{cases}
\]
If $\xi(x)=\lambda$, then $\xi(\mathbf s)=\xi(\mathbf t)$, whence $\mathbf v=\mathbf u\in W$.
So, it remains to consider the case when $\xi(x)\ne\lambda$.
Then (I) implies that $\xi(x)$ is a power of some letter.

Suppose that $\xi(x)$ is a power of $y$.
Then (III) implies that $\con(\xi(t^2z^2x^dy^2t^2))$ contains one of the letters $x$, $z$ and $t$.
This is only possible when $\xi(t^2z^2x^dy^2t^2)=y^pxt^{\ell_2}z^{\ell_3}x^cy^q$ for some $0\le p\le \ell_1$ and $0\le q\le \ell_4$.
But since $x \notin \{ y \} = \con(\xi(x))$ by assumption and $x \notin \con(\xi(yzt))$ by (II), the contradiction $x\notin\con(\xi(t^2z^2x^dy^2t^2))$ is deduced.
Therefore, $\xi(x)$ cannot be a power of $y$.
Similarly, $\xi(x)$ cannot be a power of $z$ as well.

Suppose now that $\xi(x)$ is a power of $t$.
Then (III) implies that $\con(\xi(t^2z^2x^dy^2t^2))$ contains one of the letters $x$, $y$ and $z$.
This is only possible when $\xi(t^2z^2x^dy^2t^2)=t^pz^{\ell_3}x^cy^{\ell_4}t^q$ for some $0\le p\le \ell_2$ and $0\le q\le \ell_5$.
Then by (I), either $\xi(t) = \lambda$ or $\xi(t)$ is a power of $t$.
This implies that $\xi(z^2x^dy^2)=z^{\ell_3}x^cy^{\ell_4}$.
Taking into account that $\xi(x)$ is a power of $t$, we apply (I) again and obtain that $\xi(z^2)=z^{\ell_3}$, $\xi(y^2)=y^{\ell_4}$ and $c=d=0$.
This is only possible when $\xi(xt^2z^2x^d)=y^rxt^{\ell_2}z^{\ell_3}x^c$ for some $0\le r\le \ell_1$.
But since $x \notin \{ t \} = \con(\xi(x))$ by assumption and $x \notin \con(\xi(yzt))$ by (II), the contradiction $x\notin\con(\xi(xt^2z^2x^d))$ is deduced.
Therefore, $\xi(x)$ cannot be a power of $t$.

Finally, suppose that $\xi(x)$ is a power of $x$.
Then since $x^2$ is not a subword of $\mathbf u$, we have $\xi(x)=x$.

Suppose that $c=0$.
Then $d=0$ because otherwise, $\occ_x(\mathbf u)<\occ_x(\xi(\mathbf s))$.
Then $\xi(t^2z^2y^2t^2)=t^{\ell_2}z^{\ell_3}y^{\ell_4}t^{\ell_5}$.
It follow from (I) that $\xi(z^2)=z^{\ell_3}$, $\xi(y^2)=y^{\ell_4}$ and $\xi(t^2)=t^{\ell_2}=t^{\ell_5}$.
Then $\xi(\mathbf s)=y^{\ell_4}xt^{\ell_2}z^{\ell_3}y^{\ell_4}t^{\ell_5}xz^{\ell_3}$, $\mathbf a = y^{\ell_1-\ell_4}$ and $\mathbf b = z^{\ell_6-\ell_3}$.
Therefore, $\xi(\mathbf t)=y^{\ell_4}xt^{\ell_2}z^{\ell_3}xy^{\ell_4}t^{\ell_5}xz^{\ell_3}$, whence $\mathbf v= y^{\ell_1}xt^{\ell_2}z^{\ell_3}xy^{\ell_4}t^{\ell_5}xz^{\ell_6}\in W$, and we are done.

Suppose now that $c=1$.
If $x\in\con(\mathbf b)$, then $d=0$ because otherwise, $\occ_x(\mathbf u)<\occ_x(\xi(\mathbf s)\mathbf b)$.
This is only possible when
\[ \mathbf a\xi(y^2)=y^{\ell_1},\ x\xi(t^2z^2y^2t^2)x=xt^{\ell_2}z^{\ell_3}x\ \text{ and }\ \xi(z^2)\mathbf b = y^{\ell_4}t^{\ell_5}xz^{\ell_6}. \]
The second equality implies that $\xi(t^2z^2y^2t^2)=t^{\ell_2}z^{\ell_3}$.
Clearly, $\xi(t^2)=\lambda$, whence $\xi(z^2y^2)=t^{\ell_2}z^{\ell_3}$.
In view of (I), we have $\xi(z^2)=t^{\ell_2}$ and $\xi(y^2)=z^{\ell_3}$.
But this contradicts the fact that $\xi(z^2)\mathbf b = z^{\ell_4}t^{\ell_5}xz^{\ell_6}$.
Therefore, $x\notin\con(\mathbf b)$.
Analogously, one can verify that $x\notin\con(\mathbf a)$.
It follows that $d=1$.
Then
\[ \mathbf a\xi(y^2)=y^{\ell_1},\ x\xi(t^2z^2)x\xi(y^2t^2)x=xt^{\ell_2}z^{\ell_3}xy^{\ell_4}t^{\ell_5}x\ \text{ and }\ \xi(z^2)\mathbf b = z^{\ell_6}. \]
It follows from (I) that $\xi(z^2)=z^{\ell_3}$, $\xi(y^2)=y^{\ell_4}$ and $\xi(t^2)=t^{\ell_2}=t^{\ell_5}$.
Then $\xi(\mathbf s)=y^{\ell_3}xt^{\ell_2}z^{\ell_3}xy^{\ell_4}t^{\ell_5}xz^{\ell_4}$, $\mathbf a = y^{\ell_1-\ell_3}$ and $\mathbf b = z^{\ell_6-\ell_4}$.
Therefore, $\xi(\mathbf t)=y^{\ell_3}xt^{\ell_2}z^{\ell_3}xy^{\ell_4}t^{\ell_5}xz^{\ell_4}$, whence $\mathbf v= y^{\ell_1}xt^{\ell_2}z^{\ell_3}y^{\ell_4}t^{\ell_5}xz^{\ell_6}\in W$, and we are done.
\end{proof}

For any $n\ge1$, put \[ \mathbf B_n=\var\{x^n\approx x^{n+1}\}. \]

\begin{lemma}
\label{L: V periodic}
Suppose that $\mathbf V$ is any proper monoid variety that is a modular element of the lattice $\mathbb{MON}$.
Then $\mathbf V$ is periodic.
\end{lemma}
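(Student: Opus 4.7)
The plan is to argue by contrapositive. Suppose that $\mathbf V$ is a proper non-periodic element of $\mathbb{MON}$; the goal is to construct a configuration contradicting the modularity of $\mathbf V$. Since $\mathbf V$ is non-periodic, Lemma~\ref{L: x^n is isoterm} implies that $x^n$ is an isoterm for $\mathbf V$ for every $n\ge1$. Substituting $y=z=t=1$ in the defining identity $\mathbf p\approx\mathbf q$ of $\mathbf K$ yields $x^2\approx x^3$, so $\mathbf K$ is $2$-periodic and in particular $\mathbf V\nsubseteq\mathbf K$. A similar substitution argument shows that $\mathbf V$ cannot identify any two distinct words in $W$: any such identity would yield a power identity $x^a\approx x^b$ with $a\neq b$ holding in $\mathbf V$, contradicting non-periodicity.

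The main idea is to exhibit a pentagon $N_5$ sublattice of $\mathbb{MON}$ having $\mathbf V$ as its non-modular element. Concretely, we look for varieties $\mathbf Y\subsetneq\mathbf Z$ satisfying $\mathbf V\vee\mathbf Y=\mathbf V\vee\mathbf Z$ and $\mathbf V\wedge\mathbf Y=\mathbf V\wedge\mathbf Z$. The natural choice is $\mathbf Z=\mathbf K$, with $\mathbf Y$ a proper subvariety of $\mathbf K$ obtained by adjoining a single new identity $\mathbf p\approx\mathbf w$ for a carefully chosen word $\mathbf w\notin W$. Lemma~\ref{L: FIC(K)-class} identifies the $\FIC(\mathbf K)$-class of $\mathbf p$ as precisely $W$, so the condition $\mathbf w\notin W$ ensures $\mathbf Y\subsetneq\mathbf Z$. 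The meet equality requires $\mathbf p\approx\mathbf w$ to hold in $\mathbf V\wedge\mathbf K$, while the join equality requires $\mathbf p\approx\mathbf w$ to fail in $\mathbf V$.

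The word $\mathbf w$ is produced from a non-trivial identity $\mathbf a\approx\mathbf b$ of $\mathbf V$ (which exists because $\mathbf V\neq\mathbf{MON}$). By a substitution $\xi$ chosen so that $\xi(\mathbf a)\in W$ and $\xi(\mathbf b)\notin W$, one sets $\mathbf w=\xi(\mathbf b)$. Then $\mathbf V$ satisfies $\xi(\mathbf a)\approx\mathbf w$, and $\mathbf K$ identifies $\xi(\mathbf a)$ with $\mathbf p$ by Lemma~\ref{L: FIC(K)-class} (as both lie in $W$), so $\mathbf V\wedge\mathbf K$ satisfies $\mathbf p\approx\mathbf w$. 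On the other hand, since $\mathbf V$ distinguishes $\mathbf p$ from every other word of $W$, it cannot satisfy $\mathbf p\approx\mathbf w$.

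The principal obstacle in carrying out this plan is the uniform construction of the substitution $\xi$ from an arbitrary non-trivial identity $\mathbf a\approx\mathbf b$ of $\mathbf V$: one must guarantee that $\xi$ places $\mathbf a$ into $W$ while forcing $\mathbf b$ outside. The elaborate six-parameter structure of $W$ exhibited in Lemma~\ref{L: FIC(K)-class} provides the flexibility needed for this construction, at the cost of a non-trivial case analysis.
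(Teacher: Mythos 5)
Your overall strategy (build a pentagon $\mathbf V\wedge\mathbf K \subset \mathbf Y\subset \mathbf K$ with $\mathbf V\vee\mathbf Y=\mathbf V\vee\mathbf K$ and $\mathbf V\wedge\mathbf Y=\mathbf V\wedge\mathbf K$) is a legitimate way to refute modularity, and it is not the route the paper takes: the paper splits on whether $\mathbf{LRB}\subseteq\mathbf V$, using the anti-chain machinery of Lemma~\ref{L: anti-hom} together with Je\v{z}ek's description of modular elements of partition lattices in one case, and an isoterm argument against $\mathbf B_{n+1}$ in the other; the variety $\mathbf K$ and the set $W$ are used only later, in the periodic case of Proposition~\ref{P: identities for mod}. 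Unfortunately your construction has a structural flaw that makes it collapse. Set $\mathbf Y=\var\{\mathbf p\approx\mathbf q,\ \mathbf p\approx\mathbf w\}$ with $\mathbf w=\xi(\mathbf b)\notin W$ and $\xi(\mathbf a)\in W$. Since $\mathbf Y\subseteq\mathbf K$ and $\xi(\mathbf a)$ lies in $W$, the $\FIC(\mathbf K)$-class of $\mathbf p$, the variety $\mathbf Y$ satisfies $\mathbf p\approx\xi(\mathbf a)$ and hence also $\xi(\mathbf a)\approx\mathbf w$. The same identity $\xi(\mathbf a)\approx\mathbf w$ holds in $\mathbf V$ by construction, so it holds in $\mathbf V\vee\mathbf Y$; but it fails in $\mathbf K$ because $\mathbf w\notin W$. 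Consequently $\mathbf V\vee\mathbf Y\subsetneq\mathbf V\vee\mathbf K$, so the required join equality is provably false. Worse, $(\mathbf V\vee\mathbf Y)\wedge\mathbf K$ satisfies $\mathbf p\approx\xi(\mathbf a)$ (from $\mathbf K$) and $\xi(\mathbf a)\approx\mathbf w$ (from $\mathbf V\vee\mathbf Y$), hence $\mathbf p\approx\mathbf w$, which gives $(\mathbf V\vee\mathbf Y)\wedge\mathbf K=\mathbf Y=(\mathbf V\wedge\mathbf K)\vee\mathbf Y$: the modular law is \emph{satisfied} by your configuration. The very identity that pushes $\mathbf p\approx\mathbf w$ into the meet $\mathbf V\wedge\mathbf K$ automatically survives into the join side and pushes it there too.

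Two further points. First, your claim that $\mathbf V$ cannot satisfy $\mathbf p\approx\mathbf w$ ``since $\mathbf V$ distinguishes $\mathbf p$ from every other word of $W$'' is a non sequitur: $\mathbf w\notin W$, so separating $\mathbf p$ from the words of $W$ says nothing about $\mathbf p\approx\mathbf w$; to rule this out you would need $\mathbf p$ to be an isoterm for $\mathbf V$, which non-periodicity does not give (take $\mathbf V=\mathbf{COM}$). Second, the existence of a substitution $\xi$ with $\xi(\mathbf a)\in W$ and $\xi(\mathbf b)\notin W$ for an \emph{arbitrary} nontrivial identity $\mathbf a\approx\mathbf b$ of $\mathbf V$ is exactly the content-bearing step, and it is only asserted, not proved. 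Even if both of these were repaired, the structural collapse above would remain, so the argument needs a genuinely different mechanism — for instance the paper's reduction to partitions of an anti-chain, where Je\v{z}ek's criterion supplies the non-modular configuration wholesale.
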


\begin{proof}
Seeking a contradiction, suppose that $\mathbf V$ is not periodic, so that $\mathbf V$ contains the variety $\mathbf{COM}$ of all commutative monoids.
Since $\mathbf V$ is proper and non-periodic, it satisfies some nontrivial identity $\mathbf u \approx \mathbf v$ such that every letter from $\con(\mathbf u\mathbf v)$ occurs $n$ times on both sides for some $n \geq 1$, that is, $n = \occ_a(\mathbf u) = \occ_a(\mathbf v)$ for all $a \in \con(\mathbf u\mathbf v)$.
Then by \cite[Lemma~3.2]{Sapir-00}, there exist two distinct letters $x$ and $y$ such that the identity obtained from $\mathbf u \approx \mathbf v$ by retaining $x$ and $y$ is nontrivial.
Therefore we may assume that $\con(\mathbf u)=\con(\mathbf v)=\{x,y\}$ with $n = \occ_x(\mathbf u) = \occ_x(\mathbf v) = \occ_y(\mathbf u) = \occ_y(\mathbf v)$.

Suppose that $\mathbf{LRB}\subseteq\mathbf V$.
In view of Lemma~\ref{word problem LRB}, we may assume without loss of generality that $\ini(\mathbf u) = \ini(\mathbf v)=xy$.
Let $\mathbf u'$ and $\mathbf v'$ be words that obtain from $\mathbf u$ and $\mathbf v$, respectively, by making the substitution $(x,y) \mapsto (y,x)$.
Then $\ini(\mathbf u') = \ini(\mathbf v')=yx$.
Put \[ A=\{\mathbf w\in\{x,y\}^+\mid\occ_x(\mathbf w)=n+1,\,\occ_y(\mathbf w)=n\}. \]
Let $\mathbf w,\mathbf w' \in A$ and $\mathbf w \preceq \mathbf w'$.
This means that $\mathbf w' = \mathbf a\xi(\mathbf w)\mathbf b$ for some words $\mathbf a, \mathbf b \in \mathfrak X^\ast$ and some endomorphism $\xi$ of $\mathfrak X^+$.
Since the length of $\mathbf w$ equals to the length of $\mathbf w'$, we have $\mathbf a = \mathbf b = \lambda$.
Then $\mathbf w' = \xi(\mathbf w)$.
But this is only possible when $\xi(x)=x$ and $\xi(y)=y$ because $\occ_y(\mathbf w)<\occ_x(\mathbf w)$.
Hence $\mathbf w = \mathbf w'$.
So, $A$ is an anti-chain under the quasi-order $\preceq$.
Then $\mathrm L_A$ is a sublattice of $\mathbb{MON}$ by Lemma~\ref{L: anti-hom}(i) and $\mathbf V\in \mathrm L_A$.

Clearly, $\mathbf ux,\mathbf u'x,\mathbf vx,\mathbf v'x\in A$.
Evidently, $\ini(\mathbf ux) = \ini(\mathbf vx)=xy$ and $\ini(\mathbf u'x) = \ini(\mathbf v'x)=yx$.
In view of Lemma~\ref{word problem LRB}, the words $\mathbf ux$ and $\mathbf u'x$ lie in distinct $\FIC(\mathbf V)$-classes.
Then, since $\mathbf V$ satisfies the nontrivial identities $\mathbf ux \approx \mathbf vx$ and $\mathbf u'x \approx \mathbf v'x$, the equivalence $\gamma=\varphi(\mathbf V)$ contains at least two non-singleton classes.
It is verified in~\cite[Proposition~2.2]{Jezek-81} that a partition $\rho\in\Part(X)$ is a modular element in $\Part(X)$ if and only if $\rho$ has at most one non-singleton class.
This result implies that $\gamma$ is not a modular element of the lattice $\Part(A)$.
Then there are $\alpha,\beta \in \Part(A)$ such that $\alpha\subset\beta$ and
\begin{equation}
\label{gamma is not modular}
(\gamma\wedge\beta)\vee\alpha\subset(\gamma\vee\alpha)\wedge\beta.
\end{equation}
According to Lemma~\ref{L: anti-hom}, we can find a non-periodic variety $\mathbf X \in \mathrm L_A$ such that $\varphi(\mathbf X)=\alpha$.
Put \[ \mathbf Y = \mathbf X \wedge \var\{\mathbf w \approx \mathbf w' \mid (\mathbf w,\mathbf w')\in \beta\}. \]
Clearly, $\mathbf Y\in \mathrm L_A$ and $\varphi(\mathbf Y)=\beta$.
Then \[ (\mathbf V\wedge\mathbf X)\vee\mathbf Y\subset(\mathbf V\vee\mathbf Y)\wedge\mathbf X \] because otherwise, the inclusion~\eqref{gamma is not modular} does not hold.
We see that $\mathbf V$ is not a modular element of the lattice $\mathbb{MON}$, which is a contradiction.

Suppose now that $\mathbf{LRB}\nsubseteq\mathbf V$.
Then Lemma~\ref{word problem LRB} allows us to assume that $\mathbf u$ starts with the letter $x$ but $\mathbf v$ starts with the letter $y$.
Let \[ \mathbf Z=\var\{x^{n+1}\approx x^{n+2},\,x^n\mathbf v \approx x^{n+1}\mathbf v\}. \]
We note that $\mathbf V\wedge\mathbf B_{n+1}\subseteq\mathbf Z$.
Indeed, $\mathbf V\wedge\mathbf B_{n+1}$ satisfies the identities \[ x^n\mathbf v\approx x^n\mathbf u\approx x^{n+1}\mathbf u\approx x^{n+1}\mathbf v \] and so the identity $x^n\mathbf v \approx x^{n+1}\mathbf v$.
Clearly, the word $x^n\mathbf v$ is an isoterm for both $\mathbf V\vee\mathbf Z$ and $\mathbf B_{n+1}$.
It follows that $x^n\mathbf v$ is an isoterm for $(\mathbf V\vee\mathbf Z)\wedge\mathbf B_{n+1}$ as well.
However, $x^n\mathbf v$ is not an isoterm for $\mathbf Z$ because $\mathbf Z$ satisfies $x^n\mathbf v \approx x^{n+1}\mathbf v$.
Therefore, \[ (\mathbf V\wedge\mathbf B_{n+1})\vee\mathbf Z=\mathbf Z\subset(\mathbf V\vee\mathbf Z)\wedge\mathbf B_{n+1}. \]
This means that $\mathbf V$ is not a modular element of the lattice $\mathbb{MON}$, which again is a contradiction.
\end{proof}

The following is the main result of this section.

\begin{proposition}
\label{P: identities for mod}
Suppose that $\mathbf V$ is any proper monoid variety that is a modular element of the lattice $\mathbb{MON}$.
Then $\mathbf V$ satisfies the identities
\begin{align}
\label{xx=xxx}
x^2&\approx x^3,\\
\label{xxy=yxx}
x^2y&\approx y x^2.
\end{align}
\end{proposition}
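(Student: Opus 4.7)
The plan is to work stage-by-stage, combining the periodicity result already in hand with two separate contradictions to the modular law. Since $\mathbf V$ is proper and modular, Lemma~\ref{L: V periodic} already guarantees that $\mathbf V$ is periodic, so $\mathbf V$ satisfies $x^n \approx x^{n+m}$ for some $n,m \ge 1$. I would then derive each of~\eqref{xx=xxx} and~\eqref{xxy=yxx} by assuming it fails and exhibiting auxiliary varieties $\mathbf X,\mathbf Y$ for which a strict inclusion of the form $(\mathbf V \wedge \mathbf Y) \vee \mathbf X \subsetneq (\mathbf V \vee \mathbf X) \wedge \mathbf Y$ witnesses non-modularity, following the template already employed in the two halves of the proof of Lemma~\ref{L: V periodic}.

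For~\eqref{xx=xxx}, I would assume $\mathbf V \not\models x^2 \approx x^3$. Lemma~\ref{L: x^n is isoterm} splits this into two cases: either $\mathbf C_3 \subseteq \mathbf V$ (so $x^2$ is an isoterm for $\mathbf V$), or $\mathbf V \models x^2 \approx x^{2+m}$ for some $m \ge 2$. In each subcase I would pit $\mathbf V$ against $\mathbf B_2 = \var\{x^2 \approx x^3\}$ together with a cleverly chosen variety $\mathbf Z$ encoding a $\mathbf V$-admissible identity that fails in $\mathbf B_2$, arranging for some specific word to be an isoterm for $\mathbf V \vee \mathbf Z$ (and hence for $(\mathbf V \vee \mathbf Z) \wedge \mathbf B_2$) while being collapsed inside $(\mathbf V \wedge \mathbf B_2) \vee \mathbf Z$; this mirrors directly the non-$\mathbf{LRB}$ construction at the end of the proof of Lemma~\ref{L: V periodic}.

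For~\eqref{xxy=yxx}, with~\eqref{xx=xxx} in hand, I would assume $\mathbf V \not\models x^2y \approx yx^2$ and invoke Lemma~\ref{L: FIC(K)-class}: the set $W = W_1 \cup W_2$ is a single $\FIC(\mathbf K)$-class because $\mathbf p \approx \mathbf q$ already bridges $W_1$ and $W_2$. Because $\mathbf V$ fails $x^2y \approx yx^2$, it cannot merge $W_1$ and $W_2$, and so on a suitable sub-anti-chain $A \subseteq W$---obtained by fixing all the exponents $r_i$ so that the deletion hypothesis of Lemma~\ref{L: anti-hom} holds while retaining representatives from both $W_1$ and $W_2$---the partition $\varphi_A(\mathbf V)$ would have at least two non-singleton blocks. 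By the Je\v{z}ek criterion~\cite[Proposition~2.2]{Jezek-81} already used in Lemma~\ref{L: V periodic}, such a partition is not modular in $\Part(A)$. Lemma~\ref{L: anti-hom}(iii) then lifts a witnessing pair of partitions of $A$ to a witnessing pair of subvarieties of $\mathbb{MON}$, contradicting the assumed modularity of $\mathbf V$.

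The main obstacle is the second stage. The set $W$ itself fails the deletion hypothesis of Lemma~\ref{L: anti-hom} (for example, $\mathbf u_{\{y\}}$ varies with the $r_i$'s), so the essential technical work is to extract a sub-anti-chain $A \subseteq W$ simultaneously (i) satisfying the deletion hypothesis of Lemma~\ref{L: anti-hom}, (ii) contained in the $\FIC(\mathbf V)$-invariant subset inherited from Lemma~\ref{L: FIC(K)-class} so that $\mathbf V \in \mathrm L_A$, and (iii) rich enough that the failure of $x^2y \approx yx^2$ in $\mathbf V$ genuinely produces at least two non-singleton classes in $\varphi_A(\mathbf V)$. Balancing these three constraints, together with the casework for~\eqref{xx=xxx}, is where the bulk of the proof lives.
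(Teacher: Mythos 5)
There is a genuine gap, concentrated in your second stage, and your first stage is only a placeholder. For~\eqref{xxy=yxx}, the sub-anti-chain $A\subseteq W$ you hope to extract cannot exist. The hypothesis of Lemma~\ref{L: anti-hom} requires $\mathbf u_X=\mathbf v_X$ for all $\mathbf u,\mathbf v\in A$ and every nonempty $X$. Taking $X=\{y,z,t\}$ sends every word of $W_1$ to $x^2$ and every word of $W_2$ to $x^3$, so $A$ cannot meet both $W_1$ and $W_2$; taking $X=\{x\}$ forces all the exponents $r_1,\dots,r_6$ to agree across $A$, so $A$ is in fact a singleton and $\Part(A)$ is trivial. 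Your constraints (i) and (iii) are therefore incompatible, and the Je\v{z}ek criterion, which needs at least \emph{two} non-singleton blocks, could never be triggered. Moreover the implication you rely on --- that failure of $x^2y\approx yx^2$ prevents $\mathbf V$ from merging $W_1$ with $W_2$ --- points the wrong way: what is actually provable (and what the paper proves) is that $x^2\approx x^3$ together with a commutation identity $x^ky\approx yx^k$ \emph{forces} $\mathbf V\wedge\mathbf B_2$ to merge $\mathbf p$ with $\mathbf q$.

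The paper's route is different, and you are missing its key structural input. After periodicity, in the case $n>1$ one first shows $\mathbf E\nsubseteq\mathbf V$ via \cite[Lemma~2]{Gusev-21+} and applies Lemma~\ref{L: does not contain E} and its dual to obtain $x^{p_1}yx^{q_1}\approx yx^{r_1}$ and $x^{p_2}yx^{q_2}\approx x^{r_2}y$, hence $x^ky\approx yx^k$ for some $k\ge n$ --- \emph{before} either target identity is known. This commutation identity is exactly what yields $\mathbf V\wedge\mathbf B_2\subseteq\mathbf K$; the single triple $(\mathbf V,\mathbf B_2,\mathbf K)$, together with Lemma~\ref{L: FIC(K)-class} used directly as a statement about $\FIC(\mathbf K)$-classes (not through Lemma~\ref{L: anti-hom}), Lemma~\ref{L: x^n is isoterm} when $n>2$, and the variety $\mathbf A_m$ when $m>1$, then eliminates every case except $n=2$, $m=1$, after which \eqref{xx=xxx} and \eqref{xxy=yxx} drop out simultaneously. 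Your stage for~\eqref{xx=xxx} never names the witnessing varieties (``a cleverly chosen variety $\mathbf Z$''), which is where the entire content lies, and you also omit the completely regular case $n=1$ (noncommutative completely regular varieties, and commutative varieties containing a nontrivial group), which must be and is treated separately via \cite[Lemmas~3.1 and~3.2]{Gusev-18AU}. As written, the proposal neither supplies the constructions that would constitute a proof nor follows a workable alternative route.
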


\begin{proof}
By Lemma~\ref{L: V periodic}, the variety $\mathbf V$ is periodic and so it satisfies the identity $x^n \approx x^{n+m}$ for some $n,m \ge 1$; we may assume $n$ and $m$ to be the least possible.

First, suppose that $n=1$, so that $\mathbf V$ is completely regular.
If $\mathbf X$ is a noncommutative completely regular variety, then it is verified in~\cite[Lemma~3.1]{Gusev-18AU} that
\[ (\mathbf X\wedge\mathbf D)\vee\mathbf C_2\subset(\mathbf X\vee\mathbf C_2)\wedge\mathbf D, \]
whence $\mathbf X$ is not a modular element of the lattice $\mathbb{MON}$.
If $\mathbf X$ is a commutative variety containing a nontrivial group, then it is proved in~\cite[Lemma~3.2]{Gusev-18AU} that
\[ (\mathbf X\wedge\mathbf B_2)\vee\mathbf Q\subset(\mathbf X\vee\mathbf Q)\wedge\mathbf B_2, \]
where $\mathbf Q=\var\{yxyzxy \approx yxzxyxz\}$, whence $\mathbf X$ is again not a modular element of the lattice $\mathbb{MON}$.
In view of these two facts, the variety $\mathbf V$ is commutative and does not contain any nontrivial group.
Since $\mathbf V$ is also completely regular, it is idempotent and so is contained in $\mathbf{SL}$.
Obviously, $\mathbf{SL}$ satisfies~\eqref{xx=xxx} and~\eqref{xxy=yxx}.

So, it remains to consider the case when $n>1$.
Then $\mathbf C_n\subseteq\mathbf V$ and $\mathbf C_{n+1}\nsubseteq \mathbf V$ by Lemma~\ref{L: x^n is isoterm}.
It follows from \cite[Lemma~2]{Gusev-21+} that $\mathbf E\nsubseteq\mathbf V$.
Then by Lemma~\ref{L: does not contain E}, $\mathbf V$ satisfies the identity $x^{p_1}yx^{q_1} \approx yx^{r_1}$ for some $p_1,q_1 \ge 1$ and $r_1 \ge 2$.
The dual arguments imply that $\mathbf V$ also satisfies the identity $x^{p_2}yx^{q_2} \approx x^{r_2}y$ for some $p_2,q_2 \ge 1$ and $r_2 \ge 2$.
Since one can substitute $x^n$ for $x$ in these identities and $\mathbf V$ satisfies $x^n \approx x^{n+m}$, we may assume without loss of generality that \[ p_1, p_2,q_1,q_2,r_1,r_2\in\{n,n+1,\dots,n+m-1\}. \]
Evidently, there exist $\ell_1$ and $\ell_2$ such that the identities
\[ x^{p_1}yx^{q_1+\ell_1} \approx yx^{r_1+\ell_1}\ \text{ and }\ x^{p_2+\ell_2}yx^{q_2} \approx x^{r_2+\ell_2}y \]
are equivalent modulo $x^n \approx x^{n+m}$ to the identities
\[ x^{p_1}yx^{q_2} \approx yx^{r_1+\ell_1}\ \text{ and }\ x^{p_1}yx^{q_2} \approx x^{r_2+\ell_2}y, \]
respectively.
Therefore $\mathbf V$ satisfies $x^{r_2+\ell_2}y \approx yx^{r_1+\ell_1}$, whence it satisfies
\begin{equation}
\label{x^ky=yx^k}
x^ky \approx yx^k
\end{equation}
for some $k \ge n$.
It follows that the meet $\mathbf V\wedge\mathbf B_2$ satisfies the identities \eqref{xx=xxx} and \eqref{x^ky=yx^k}; it also satisfies the identity $\mathbf p \approx \mathbf q$ because
\begin{align*}
\mathbf p = y^2xt^2z^2y^2t^2xz^2 & \stackrel{\eqref{xx=xxx}}\approx y^kxt^kz^ky^kt^kxz^k \\ & \stackrel{\eqref{x^ky=yx^k}}\approx x^2y^{2k}z^{2k}t^{2k} \\
& \stackrel{\eqref{xx=xxx}}\approx x^3y^{2k}z^{2k}t^{2k} \\ & \stackrel{\eqref{x^ky=yx^k}}\approx y^kxt^kz^kxy^kt^kxz^k \\ & \stackrel{\eqref{xx=xxx}}\approx y^2xt^2z^2xy^2t^2xz^2=\mathbf q.
\end{align*}
Therefore $\mathbf V\wedge\mathbf B_2\subseteq\mathbf K$, so that $(\mathbf V\wedge\mathbf B_2)\vee\mathbf K=\mathbf K$.

Suppose that $n>2$ or $m>1$.
Recall from the beginning of the section that \[ W_1=\{y^{r_1}xt^{r_2}z^{r_3}y^{r_4}t^{r_5} xz^{r_6}\mid r_1,r_2,r_3,r_4,r_5,r_6\ge2\}. \]
Let $\mathbf a \approx \mathbf b$ be any identity of $\mathbf V\vee\mathbf K$ with $\mathbf a\in W_1$.
If $n>2$, then $\mathbf b\in W_1$ by Lemmas~\ref{L: x^n is isoterm} and~\ref{L: FIC(K)-class}.
Clearly, $\mathbf V$ contains the variety $\mathbf A_m$ of all Abelian groups of exponent $m$.
It is well known and easily verified that an identity $\mathbf w \approx \mathbf w'$ holds in $\mathbf A_m$ if and only if $\occ_a(\mathbf w)\equiv\occ_a(\mathbf w')$ (mod $m$) for all $a \in \mathfrak X$.
This fact and Lemma~\ref{L: FIC(K)-class} imply that if $m>1$, then $\mathbf b\in W_1$.
We see that if $n>2$ or $m>1$, then $\mathbf b\in W_1$ in either case.
Evidently, if $\mathbf B_2$ satisfies an identity $\mathbf c \approx \mathbf d$ with $\mathbf c\in W_1$, then $\mathbf d\in W_1$.
This implies that if an identity of the form $\mathbf p \approx \mathbf w$ holds in $(\mathbf V\vee\mathbf K)\wedge \mathbf B_2$, then $\mathbf w\in W_1$.
In particular, $(\mathbf V\vee\mathbf K)\wedge \mathbf B_2$ violates $\mathbf p \approx \mathbf q$.
Therefore, \[ (\mathbf V\wedge\mathbf B_2)\vee\mathbf K=\mathbf K\subset(\mathbf V\vee\mathbf K)\wedge\mathbf B_2. \]
This means that $\mathbf V$ is not a modular element in $\mathbb{MON}$.
It follows that $n=2$ and $m=1$.
Then $\mathbf V$ satisfies~\eqref{xx=xxx}.
Besides that, since~\eqref{x^ky=yx^k} holds in the variety $\mathbf V$, this variety satisfies~\eqref{xxy=yxx}.

Proposition~\ref{P: identities for mod} is thus proved.
\end{proof}

\section{Proof of Theorem~\ref{T: cancel}}
\label{Sec: proof of main result}

\textit{Necessity}.
Let $\mathbf V$ be any proper monoid variety that is a cancellable element of the lattice $\mathbb{MON}$.
Since any cancellable element is modular, Proposition~\ref{P: identities for mod} implies that $\mathbf V$ satisfies the identities~\eqref{xx=xxx} and~\eqref{xxy=yxx}.
If $\mathbf V$ does not coincide with any of the varieties $\mathbf T$, $\mathbf{SL}$, $\mathbf C_2$ and $\mathbf D$, then $\mathbf V$ contains the variety $\mathbf D_2$ by~\cite[Lemma~3.3(i)]{Gusev-Vernikov-21+}.
Proposition~\ref{P: mod not distributive} and the fact that $\mathbf C_3\nsubseteq\mathbf V$ imply that $\mathbf V\vee\mathbf R = \mathbf V \vee \mathbf R^\delta$ and $\mathbf V\wedge\mathbf R = \mathbf V \wedge \mathbf R^\delta = \mathbf D$, contradicting the assumption that $\mathbf V$ is a cancellable element of $\mathbb{MON}$.
Hence $\mathbf V$ coincides with one of the varieties $\mathbf T$, $\mathbf{SL}$, $\mathbf C_2$ and $\mathbf D$.

\smallskip

\textit{Sufficiency}. Obviously, $\mathbf T$ and $\mathbf{MON}$ are cancellable elements of $\mathbb{MON}$.
An element $x$ of a lattice $L$ is \textit{costandard} if \[ \forall\,y,z\in L\colon\quad (x\wedge z)\vee y = (x\vee y)\wedge (z\vee y). \]
It is easily seen that any costandard element is cancellable.
It is shown in \cite[Theorem~1.2]{Gusev-18AU} that the varieties $\mathbf{SL}$ and $\mathbf C_2$ are costandard elements of the lattice $\mathbb{MON}$.
Therefore, these varieties are cancellable elements of this lattice.

So, it remains to establish that $\mathbf D$ is a cancellable element in $\mathbb{MON}$.
Let $\mathbf X$ and $\mathbf Y$ be monoid varieties such that $\mathbf D\vee\mathbf X=\mathbf D\vee\mathbf Y$ and $\mathbf D\wedge\mathbf X=\mathbf D\wedge\mathbf Y$.
If $\mathbf D\subseteq\mathbf X$, then $\mathbf D=\mathbf D\wedge\mathbf X=\mathbf D\wedge\mathbf Y$, so that $\mathbf D\subseteq\mathbf Y$, whence $\mathbf X=\mathbf D\vee\mathbf X=\mathbf D\vee\mathbf Y=\mathbf Y$ and we are done.
Therefore by symmetry, we may assume that $\mathbf D \nsubseteq \mathbf X$ and $\mathbf D \nsubseteq \mathbf Y$.

Now the subvariety lattice $\mathfrak L(\mathbf D)$ is the chain $\mathbf T\subset\mathbf{SL}\subset\mathbf C_2\subset\mathbf D$; see Fig.~\ref{L(R vee dual to R)}.
It follows that $\mathbf D\wedge \mathbf X=\mathbf D\wedge \mathbf Y\in\{\mathbf T,\mathbf{SL},\mathbf C_2\}$.
If $\mathbf D\wedge \mathbf X=\mathbf D\wedge \mathbf Y=\mathbf T$, then $\mathbf X$ and $\mathbf Y$ are varieties of groups by~\cite[Lemma~2.1]{Gusev-Vernikov-18}.
Then $\mathbf X\vee\mathbf Y$ is a variety of groups too and so $\mathbf{SL}\nsubseteq\mathbf X\vee\mathbf Y$, whence
\begin{equation}
\label{D wedge (X vee Y)}
\mathbf D \wedge (\mathbf X\vee\mathbf Y)=\mathbf D\wedge \mathbf X=\mathbf D\wedge \mathbf Y.
\end{equation}
If $\mathbf D\wedge \mathbf X=\mathbf D\wedge \mathbf Y=\mathbf{SL}$, then $\mathbf X$ and $\mathbf Y$ are completely regular varieties by~\cite[Corollary~2.6]{Gusev-Vernikov-18}.
Then $\mathbf X\vee\mathbf Y$ is completely regular and so $\mathbf C_2\nsubseteq\mathbf X\vee\mathbf Y$, whence the equality~\eqref{D wedge (X vee Y)} is true.
Finally, if $\mathbf D\wedge \mathbf X=\mathbf D\wedge \mathbf Y=\mathbf C_2$, then $\mathbf X$ and $\mathbf Y$ are commutative by Lemma~\ref{L: non-cr and non-commut}.
Then $\mathbf X\vee\mathbf Y$ is commutative and so $\mathbf D\nsubseteq\mathbf X\vee\mathbf Y$, whence the equality~\eqref{D wedge (X vee Y)} is true again.
We see that the equality~\eqref{D wedge (X vee Y)} holds in any case.

Clearly,
\begin{equation}
\label{D vee (X vee Y)}
\mathbf D\vee(\mathbf X\vee\mathbf Y)=\mathbf D\vee\mathbf X=\mathbf D\vee\mathbf Y.
\end{equation}
Then
\begin{align*}
\mathbf X & = (\mathbf D\wedge \mathbf X)\vee \mathbf X&&\text{because } \mathbf D\wedge \mathbf X\subset \mathbf X\\
& = (\mathbf D\wedge (\mathbf X \vee \mathbf Y))\vee \mathbf X&&\text{by~\eqref{D wedge (X vee Y)}}\\
& = (\mathbf D\vee\mathbf X)\wedge (\mathbf X \vee \mathbf Y)&&\text{by~\cite[Proposition~7]{Gusev-21+}}\\
& = (\mathbf D\vee(\mathbf X\vee\mathbf Y))\wedge (\mathbf X \vee \mathbf Y)&&\text{by~\eqref{D vee (X vee Y)}}\\
& = (\mathbf X \vee \mathbf Y)&&\text{because } \mathbf X \vee \mathbf Y\subset \mathbf D\vee(\mathbf X\vee\mathbf Y).
\end{align*}
We see that $\mathbf X=\mathbf X\vee\mathbf Y$.
By symmetry, $\mathbf Y=\mathbf X\vee\mathbf Y$, whence $\mathbf X=\mathbf Y$.
Therefore, $\mathbf D$ is a cancellable element in $\mathbb{MON}$. \qed

\end{document}